\renewcommand{\textsl}{\emph}
\title{A Primal Condition \\ for Approachability with Partial Monitoring}
\author{Shie Mannor --- Vianney Perchet --- Gilles Stoltz
\thanks{This work began after an interesting objection of and further
discussions with Jean-François Mertens
during the presentation of our earlier results~\citep{Ext} on the dual characterization of approachability with partial monitoring
at the conference \textsl{Games Toulouse 2011}. The material presented in this article was developed further with Sylvain Sorin back in Paris, whom we thank deeply for his advice and encouragements.
Sadly, Jean-François Mertens, a close collaborator of Sylvain Sorin, passed away in the mean time.
This contribution is in honor of both of these important contributors to the theory of repeated games.} \vspace{.3cm} \\
\hspace{-1cm}
Technion --- Université Paris-Diderot --- CNRS / Ecole normale supérieure / HEC Paris}
\DeclareMathOperator{\cav}{cav}
\newcommand\cH{\mathcal{H}}
\newcommand\bH{\mathbf{H}}
\newcommand\bh{\mathbf{h}}
\newcommand\cC{\mathcal{C}}
\newcommand\cD{\mathcal{D}}
\newcommand\cI{\mathcal{I}}
\newcommand\cJ{\mathcal{J}}
\newcommand\cK{\mathcal{K}}
\newcommand\cL{\mathcal{L}}
\newcommand\cF{\mathcal{F}}
\newcommand\cS{\mathcal{S}}
\newcommand\N{\mathbb{N}}
\newcommand\R{\mathbb{R}}
\renewcommand\L{\mathbb{L}}
\newcommand\om{\overline{m}}
\newcommand\tr{\widetilde{r}}
\newcommand\ts{\widetilde{s}}
\renewcommand{\d}{\mathrm{d}}
\renewcommand{\P}{\mathbb{P}}
\newcommand{\Full}{\mathrm{Full}}
\newcommand{\Dark}{\mathrm{Dark}}
\renewcommand{\leq}{\leqslant}
\renewcommand{\geq}{\geqslant}
\renewcommand{\mathbf}{\boldsymbol}
\renewcommand{\epsilon}{\varepsilon}
\newcommand{\Chs}{\cC_{\mathrm{\tiny hs}}}
\newcommand{\Corth}{\cC_{\mathrm{\tiny orth}}}
\newcommand{\Cpoly}{\cC_{\mathrm{\tiny polyt}}}
\newcommand{\Chsind}[1]{\cC_{\mathrm{\tiny hs},\,#1}}
\renewcommand{\textsl}{\emph}
\renewcommand{\bar}{\overline}
\newcommand{\NR}{\mathrm{NR}}
\newtheorem{definition}{Definition}
\newtheorem{theorem}{Theorem}
\newtheorem{proposition}{Proposition}
\newtheorem{lemma}{Lemma}
\newtheorem{rema}{Remark}
\newtheorem{exam}{Example}
\newtheorem{corollary}{Corollary}
\newenvironment{remark}[1]{\begin{rema} \em}{\end{rema}}
\newenvironment{example}[1]{\begin{exam} \em}{\end{exam}}
\renewcommand{\citealt}{\citealp}
\begin{document}
\maketitle
\thispagestyle{empty}

\begin{abstract}
In approachability with full monitoring there are two types of conditions that are known to be equivalent for convex sets: a primal and a dual condition. The primal one is of the form: a set $\cC$ is approachable if and only all containing half-spaces are
approachable in the one-shot game. The dual condition is of the form: a convex set $\cC$ is approachable if
and only if it intersects all payoff sets of a certain form. We consider approachability in games with partial monitoring.
In previous works~\citep{Per11,Ext} we provided a dual characterization of approachable convex sets
and we also exhibited efficient strategies in the case where $\cC$ is a polytope.
In this paper we provide primal conditions on a convex set to be approachable with partial monitoring.
They depend on a modified reward function and lead to approachability strategies
based on modified payoff functions and that proceed by projections similarly to Blackwell's (1956) strategy.
This is in contrast with previously studied strategies in this context that relied mostly
on the signaling structure and aimed at estimating well the distributions of the signals received.
Our results generalize classical results by~\cite{Koh75} (see also~\citealt{MeSoZa94})
and apply to games with arbitrary signaling structure as well as to arbitrary convex sets.
\end{abstract}

\section{Introduction}

Approachability theory dates back to the seminal paper of \cite{Bla56}. In this paper Blackwell presented conditions under which a player can guarantee that the long-term average vector-valued reward is asymptotically close to some target set regardless of the opponent actions.
If this property holds, we say that the set is approachable.
In the full monitoring case studied in \cite{Bla56} there are two equivalent conditions for a convex set to be approachable.
The first one, known as a primal condition (or later termed the ``B'' condition in honor for
Blackwell) states that every half-space that contains the target set is also approachable. It turns out that whether a half-space is approachable is determined by the sign of the value of some associated zero-sum game.
The second characterization, known as the dual condition, states that for every mixed action of the opponent, the player can guarantee that the
one-shot vector-valued reward is inside the target set.

Approachability theory has found many applications in game theory, online learning, and related fields. Both primal and dual characterizations are of interest therein. Indeed, checking if the dual condition holds is formally simple while a concrete approaching strategy naturally derives from the primal condition (it only requires solving a one-shot zero-sum game at every stage of the repeated vector-valued game).
\medskip

Approachability theory has been applied to zero-sum repeated games with incomplete information and/or imperfect (or partial) monitoring. The work of \cite{Koh75} (see also \citealt{MeSoZa94}) uses approachability to derive strategies for games with incomplete information. The general case of repeated vector-valued games with partial monitoring was studied only recently. A dual characterization of approachable convex sets with partial monitoring was presented by~\cite{Per11}. However, it is not useful for deriving  approaching strategies since it essentially requires to run a calibration algorithm, which is known to be computationally hard.
In a recent work (\citealt{Ext}) we derived efficient strategies for approachability in games with partial monitoring
in some cases, e.g., when the convex set to be approached is a polytope.
However, these strategies are based on the dual condition, and not on any primal one: they thus do not shed light on the structure of the game.
\medskip

In this paper we provide a primal condition for approachability in games with partial monitoring. It can be stated, as in \cite{Bla56}, as a requirement that every half-space containing the target set is one-shot approachable. However, the reward function has to be modified in some
cases for the condition to be sufficient. We also show how it leads to an efficient approachability strategy, at least in the case of approachable polytopes.

\paragraph{Outline.}
In Section~\ref{sec:model} we define a model of partial monitoring and recall some of the basic results from approachability (both in terms of its primal and dual characterizations). In Section~\ref{sec:relatedandobjective} we explain the current state-of-the-art, recall the dual condition for approachability with partial monitoring, and outline our objectives.
In Section~\ref{sec:halfspaces} we provide results for approaching half-spaces as they have the simplest characterization of approachability:
we show that the signaling structure has no impact on approachability of half-spaces, only the payoff structure does. This is not the case anymore for approachability of more complicated convex sets, which is the focus of the subsequent sections.
In Section~\ref{sec:prodmeasureorthants} we discuss the case of a target set that is an orthant under additional properties on the payoff--signaling structure: we show that a natural primal condition holds.
This condition, which we term the ``upper-right-corner property'' is the main technical contribution of the paper. We show that basically, we can study approachability for a modified payoff function and that under some favorable conditions, a primal condition is easy to derive (which is the main conceptual contribution).
As an intermezzo,
we link our results to~\cite{Koh75} in Section~\ref{SE:IncompleteInfo} and show that repeated games with imperfect information can be analyzed using our approach for games with imperfect monitoring. In Section~\ref{sec:noproduct} we then analyze the case of a general signaling structure
for the approachability of orthants and provide an efficient approaching strategy based on the exhibited primal conditions.
Finally, we relax the shape of the target set from an orthant to a polytope in Section~\ref{SE:Polyhedra},
and then to a general convex set in Section~\ref{sec:gen}. Our generalizations show that the {\em same} primal condition holds in all cases.
The generalization from orthants to polytopes is based on the observation that any polytope can be represented as an orthant in a space whose dimensionality is the number of linear inequalities describing the polytope and on a modified reward function. The generalization to general convex sets uses support functions and lifting to derive similar results; we provide some background material on support functions in the appendix.

\section{Model and preliminaries}
\label{sec:model}

We now define the model of interest and then recall some basic results from approachability theory for repeated vector-valued games (with full monitoring).

\subsection*{Model and notation}
We consider a vector-valued game between two players, a decision maker (or player) and Nature, with respective finite action sets $\cI$ and $\cJ$, whose
cardinalities are referred to as $I$ and $J$.
We denote by $d$ the dimension of the reward vector
and equip $\R^d$ with the $\ell^2$--norm $\|\,\cdot\,\|_2$.
The payoff function of the player is given by
a mapping $r : \cI \times \cJ \to \R^d$,
which is multi-linearly extended to $\Delta(\cI) \times \Delta(\cJ)$,
the set of product-distributions over $\cI \times \cJ$.

At each round, the player and Nature simultaneously choose their actions $i_n \in \cI$ and $j_n \in \cJ$,
at random according to probability distributions denoted by $x_n \in \Delta(\cI)$ and $y_n \in \Delta(\cJ)$. At the end of a round, the
player does not observe Nature's action $j_n$ nor even the payoff $r_n:=r(i_n,j_n)$ he obtains: he only gets to see some signal.
More precisely, there is a finite set $\cH$ of possible signals, and the signal $s_n \in \cH$ that is shown to the
player is drawn at random according to the distribution
$H(i_n,j_n)$, where the mapping $H : \cI \times \cJ \to \Delta(\cH)$ is known by both players.

The player is said to have full monitoring if $\cH=\cJ$ and $H(i,j) = \Full(i,j):=\delta_j$, i.e., if the action of Nature is observed.
We speak of a game in the dark when the signaling structure $H$ is not informative at all, i.e., when $\cH$ is reduced to a single
signal referred to as $\emptyset$; we denote this situation by $H = \Dark$.
\bigskip

Of major interest will be \emph{maximal information mapping} $\bH: \Delta(\cJ) \to \Delta(\cH)^{\cI}$, which is defined as follows.
The image of each $j \in \cJ$ is the vector $\bH(j) = \bigl( H(i,j) \bigr)_{i \in \cI}$, and this definition is
extended linearly onto $\Delta(\cJ)$. An element of the image $\cF = \bH\bigl( \Delta(\cJ) \bigr)$ of $\bH$ is referred to as a \emph{flag}.
The notion of ``flag'' is key: the player only accesses the mixed actions $y$ of Nature through $H$. Indeed, as is intuitive and as is made
more formal at the end of the proof of Proposition~\ref{PR:Kohlberg}, he could at best
access or estimate the flag $\bH(y)$ but not $y$ itself.

For every $x \in \Delta(\cI)$ and $\bh \in \cF$ the set of payoffs compatible with $\bh$ is
\begin{equation}
\label{def:om}
\om(x,\bh)=\bigl\{ r(x,y): \ y\in \Delta(\cJ)\ \mbox{such that}\ \bH(y)=\bh\bigr\}.
\end{equation}
The set $\om(x,\bh)$ represents all the rewards that are statistically compatible with the flag~$\bh$
(or put differently, the set of all possible rewards we cannot distinguish from).

Note that with full monitoring, $\cH$ reduces to $\Delta(\cJ)$ and one has $\om(x,y) = \bigl\{ r(x,y) \bigr\}$ for all $y \in \Delta(\cJ)$.
\bigskip

Finally, we denote by $M$ a uniform $\ell_2$--bound on $r$, that is,
\[
M = \max_{i,j} \bigl\| r(i,j) \|\,.
\]
Also, for every $n \in \N$ and sequence $(a_m)_{m \in \N}$, the
average of the first $n$ elements is referred to as $\bar{a}_n=(1/n) \sum_{m=1}^n a_m$. The distance to a set $\cC$ is denoted by $d_{\cC}$.

A behavioral strategy $\sigma$ of the player is a mapping from the set of his finite histories $\cup_{n \in \N} \left(\cI\times \cH\right)^n$ into $\Delta(\cI)$. Similarly, a strategy $\tau$ of nature is a mapping from $\cup_{n \in \N} \left(\cI\times \cH \times \cJ\right)^n$ into $\Delta(\cJ)$. As usual, we denote by $\P_{\sigma,\tau}$ the probability induced by the pair $(\sigma,\tau)$ onto $\left(\cI\times\cH\times\cJ\right)^{\mathbb{N}}$.

\subsection*{Definition and some properties of approachability}

A set $\cC \subseteq \R^d$ is $r$--approachable for the signaling structure $H$, or, in short,
is $(r,H)$--approachable, if, for all $\varepsilon >0$, there exists a strategy $\sigma_\epsilon$
of the player and a natural number $N\in\N$ such that, for all strategies $\tau$ of Nature,
\[
\P_{\sigma_{\epsilon},\tau} \bigl\{ \exists \, n \geq N \ \ \mbox{s.t.} \ \ d_{\cC}(\bar{r}_n) \geq \varepsilon \bigr\} \leq \varepsilon\,.
\]
We refer to the strategy $\sigma_\epsilon$ as an $\varepsilon$--approachability strategy of $\cC$.
It is easy to show that the approachability of $\cC$ implies the existence of a strategy ensuring
that the sequence of the average vector-valued payoffs converges to the set $\cC$ almost surely,
uniformly with respect to the strategies of Nature. By analogy such a strategy is called a $0$--approachability strategy of $\cC$.

Conversely, a set $\cC$ is $r$--excludable for the signaling structure $H$ if, for some $\delta>0$,
the complement of the $\delta$--neighborhood of $\cC$ is $r$--approachable by Nature for the signaling structure $H$.
\medskip

\paragraph{Primal characterization.}
We now discuss characterizations of approachability in the case of full monitoring. We will need the stronger
notion of one-shot approachability (the notion of one-shot excludability is stated only for later purposes).
\begin{definition}
\label{def:oneshot-orig}
A set $\cC$  is one-shot $r$--approachable if there exists
$x \in \Delta(\cI)$ such that for all $y \in \Delta(\cJ)$, one has $r(x,y) \in \cC$.
A set $\cC$ is one-shot $r$--excludable if for some $\delta>0$,
the complement of the $\delta$--neighborhood of $\cC$ is one-shot $r$--approachable by Nature.
\end{definition}
\cite{Bla56} (see also \citealt{MeSoZa94}) provided the following \emph{primal characterization} of
approachable convex\footnote{This primal characterization was actually stated by \cite{Bla56} in a more general way
for all, non-necessarily convex, sets.} sets. A set that satisfies it is called a $B$--set.
\begin{theorem}\label{CharacBset}
A convex set $\cC$ is $(r,\Full)$--approachable if and only if any containing half-space
$\Chs \supseteq \cC$ is one-shot $r$--approachable.
\end{theorem}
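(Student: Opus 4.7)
I would argue the two implications separately. For the ``only if'' direction, any containing half-space $\Chs \supseteq \cC$ is trivially $(r,\Full)$--approachable by following an $\varepsilon$--approachability strategy for $\cC$. It thus suffices to show that an approachable half-space must be one-shot approachable. Writing $\Chs = \{v \in \R^d : \langle a, v \rangle \le b\}$, one-shot approachability is equivalent to $\min_{x \in \Delta(\cI)} \max_{y \in \Delta(\cJ)} \langle a, r(x,y) \rangle \le b$, which by von Neumann's minimax theorem is equivalent to $\max_y \min_x \langle a, r(x,y)\rangle \le b$. Were this to fail, the failure would be strict by a positive gap $\eta > 0$ (obtained by compactness); letting $y^\star$ be a maximizer and having Nature play $y^\star$ i.i.d., one would get $\langle a, r(x_n, y^\star)\rangle \ge b + \eta$ deterministically, so Hoeffding--Azuma applied to the bounded martingale increments $\langle a,\, r_n - r(x_n, y^\star)\rangle$ would give $\langle a, \bar r_n\rangle \ge b + \eta/2$ for all large $n$ almost surely, contradicting approachability of $\Chs$.

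For the ``if'' direction, I would construct an explicit strategy via Blackwell's projection scheme. At stage $n+1$, let $p_n$ be the Euclidean projection of $\bar r_n$ onto $\cC$; the separating hyperplane theorem yields the containing half-space
\[
\Chs_n = \bigl\{ v \in \R^d : \langle v - p_n, \, \bar r_n - p_n\rangle \le 0 \bigr\} \supseteq \cC.
\]
By hypothesis $\Chs_n$ is one-shot $r$--approachable, so I would pick $x_{n+1} \in \Delta(\cI)$ such that $\langle r(x_{n+1}, y) - p_n, \, \bar r_n - p_n\rangle \le 0$ for every $y \in \Delta(\cJ)$. Decomposing $\bar r_{n+1} = \frac{n}{n+1}\bar r_n + \frac{1}{n+1} r_{n+1}$, expanding $\|\bar r_{n+1} - p_n\|_2^2$, using $d_{\cC}(\bar r_{n+1}) \le \|\bar r_{n+1} - p_n\|_2$, and taking conditional expectations given the filtration $\cF_n$ generated by the past, the cross term becomes non-positive and I would obtain a recursion of the form
\[
\mathbb{E}\bigl[ d_{\cC}(\bar r_{n+1})^2 \bigm| \cF_n \bigr] \;\le\; \Bigl(\tfrac{n}{n+1}\Bigr)^{\!2} d_{\cC}(\bar r_n)^2 + \frac{C\,M^2}{(n+1)^2}\,,
\]
which iterates into $\mathbb{E}\bigl[ d_{\cC}(\bar r_n)^2 \bigr] = O(1/n)$. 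Almost-sure, uniform convergence $d_{\cC}(\bar r_n) \to 0$ is then obtained by a Hoeffding--Azuma estimate on the martingale increments $\langle r_{n+1} - r(x_{n+1}, y_{n+1}),\, \bar r_n - p_n\rangle$, which are bounded by $2M\|\bar r_n - p_n\|_2$.

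The main obstacle is the almost-sure (not merely in-expectation) control in the sufficiency direction: the favorable one-shot inequality holds only conditionally in expectation, so the pathwise squared-distance recursion is noisy, and the uniform convergence must be pulled out via a careful martingale tail bound. The remaining ingredients --- identification of the supporting half-space via the projection theorem, the minimax step in the necessity direction, and the expectation-level recursion itself --- are standard once the projection-based strategy is set up properly.
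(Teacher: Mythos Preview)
Your proposal is correct and follows the classical Blackwell argument. The paper does not give a self-contained proof of this theorem (it is attributed to \cite{Bla56}), but it does sketch the sufficiency direction via the projection strategy, and there is one worthwhile difference between that sketch and what you wrote.

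The paper tracks $r'_t = r(x_t,j_t)$, the payoff averaged over the player's own randomization but evaluated at Nature's \emph{realized} pure action $j_t$, and projects $\bar r'_n$ rather than $\bar r_n$. Since one-shot $r$--approachability of $\Chsind{n}$ means $r(x_{n+1},y)\in\Chsind{n}$ for \emph{every} $y\in\Delta(\cJ)$, in particular for $y=\delta_{j_{n+1}}$, the Blackwell condition~\eqref{eq:cvrate1} holds \emph{pathwise}, not merely in conditional expectation. This yields the deterministic bound $d_{\cC}(\bar r'_n)\le 2M/\sqrt{n}$ directly, and the only martingale step needed is the final passage from $\bar r'_n$ to $\bar r_n$ via Hoeffding--Azuma. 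Your version, working with $r_t=r(i_t,j_t)$, has the cross term controlled only in conditional expectation, so the almost-sure analysis you flag as the ``main obstacle'' requires carrying the martingale through the squared-distance recursion; this is perfectly fine but yields a slightly worse pathwise rate and a longer argument. In short: both routes are valid, and the paper's choice of $r'_t$ is exactly the device that dissolves the obstacle you identify.
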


This characterization also leads to an approachability strategy,
which we describe with a slight modification with respect to its most classical
statement. We denote by $r'_t = r(x_t,j_t)$ the expected payoff obtained at round $t$, which
is a quantity that is observed by the player. At stage $n$, if $\bar{r}'_n \not\in \cC$,
let $\pi_{\cC}(\bar{r}'_n)$
denote the projection of $\bar{r}'_n$ onto $\cC$ and consider the containing half-space
$\Chsind{n+1}$ whose defining hyperplane is tangent to $\cC$ at $\pi_{\cC}(\bar{r}'_n)$.
The strategy then consists of choosing the mixed action $x_{n+1} \in \Delta(\cI)$ associated with the one-shot approachability of $\Chsind{n+1}$, as illustrated in Figure~\ref{FG:Bset}.

\begin{figure}[h!]
\begin{center}
\includegraphics{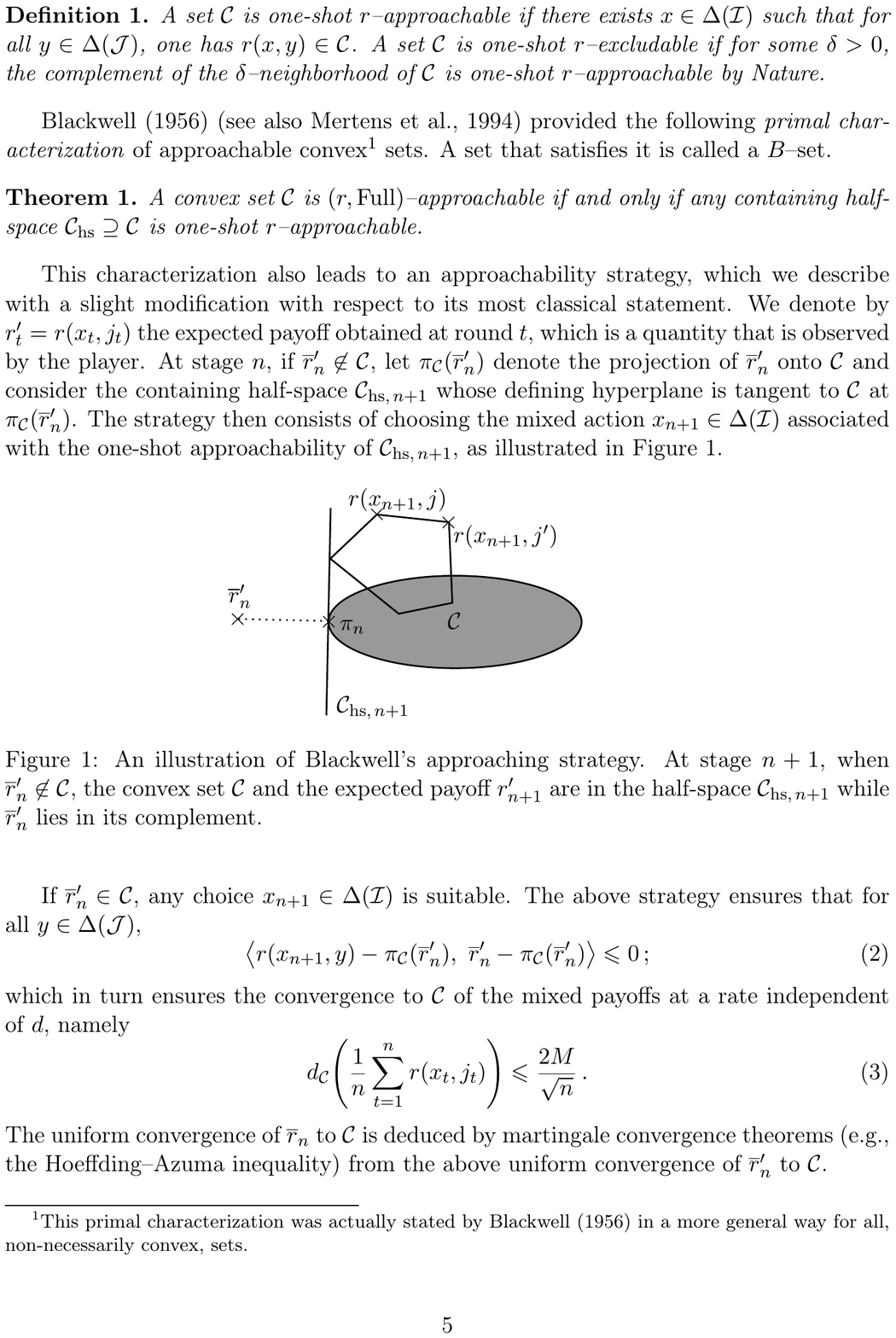}
\caption{\label{FG:Bset} An illustration of Blackwell's approaching strategy. At stage $n+1$, when
$\overline{r}'_n \not\in \cC$, the convex set $\cC$ and the expected payoff $r'_{n+1}$ are in the half-space $\Chsind{n+1}$
while $\overline{r}'_n$ lies in its complement.}
\end{center}
\end{figure}

If $\bar{r}'_n \in \cC$, any choice $x_{n+1} \in \Delta(\cI)$ is suitable.
The above strategy ensures that for all $y \in \Delta(\cJ)$,
\begin{equation}
\label{eq:cvrate1}
\bigl\langle r(x_{n+1},y) - \pi_{\cC}(\bar{r}'_n), \,\, \bar{r}'_n - \pi_{\cC}(\bar{r}'_n) \bigr\rangle \leq 0\,;
\end{equation}
which in turn ensures the convergence to $\cC$ of the mixed payoffs at a rate independent of $d$, namely
\begin{equation}
\label{eq:cvrate2}
d_{\cC}\!\left( \frac{1}{n} \sum_{t=1}^n r(x_t,j_t) \right) \leq \frac{2M}{\sqrt{n}}\,.
\end{equation}
The uniform convergence of $\bar{r}_n$ to $\cC$ is deduced by martingale convergence theorems
(e.g., the Hoeffding--Azuma inequality) from the above
uniform convergence of $\bar{r}'_n$ to $\cC$.
\medskip

\paragraph{Dual characterization.}
In the specific case of closed convex sets, using von Neumann's min-max theorem, the primal
characterization stated above can be transformed into the following \emph{dual characterization}:
\begin{equation}
\label{eq:dualfull}
\cC\subseteq \R^d\ \mbox{is}\ (r,\Full)\mbox{--approachable} \quad \Longleftrightarrow \quad
\forall\, y  \in \Delta(\cJ),\ \exists\, x\in\Delta(\cI), \ \ r(x,y)\in \cC\,.
\end{equation}
This characterization might be simpler to formulate and to check, yet it does not provide an explicit approachability strategy.

\section{Related literature and the objective of this paper}
\label{sec:relatedandobjective}

In this section we first recall the existing results on approachability with partial monitoring
and then explain in a more technical way the objectives of the paper.

\subsection*{Results on approachability with partial monitoring}

\paragraph{Concerning the primal characterization.} \cite{Koh75}---see also \citealt{MeSoZa94}---studied
specific frameworks (induced by repeated games with incomplete information, see Section~\ref{SE:IncompleteInfo})
in which approachability depends mildly on the signaling structure.
A property that we define in the sequel and call the \textsl{upper-right-corner property} holds
between the payoff function $r$ and the signaling structure $H$.
Based on this property it is rather straightforward to show that the primal characterization for the $(r,H)$--approachability
of orthants stated in Theorem~\ref{CharacBset} still holds.
Section~\ref{SE:IncompleteInfo} provides more details on this matter.

\paragraph{Concerning the dual characterization.}
\cite{Per11} provided the following dual characterization of approachable closed convex sets under partial monitoring:
\begin{equation}
\label{EQ:charac}
\cC \subseteq \R^d \ \mbox{is} \ (r,H)\mbox{--approachable} \quad \Longleftrightarrow \quad
\forall\, \bh \in \cF, \ \exists \, x\in\Delta(\cI), \ \om(x,\bh)\subseteq \cC\,.
\end{equation}
It indeed generalizes Blackwell's dual characterization~\eqref{eq:dualfull} with full monitoring,
as $\cF$ can be identified with $\Delta(\cJ)$ in this case.

Based on~\eqref{EQ:charac}, Perchet constructed the first $(r,H)$--approachability strategy of any closed convex set $\cC$;
it was based on calibrated forecasts of the vectors of $\cF$.
Because of this, the per-stage computational complexity of this strategy increases indefinitely
and rates of convergence cannot be inferred. Moreover, the construction of this strategy is unhelpful to infer a generic primal characterization.

\cite{Ext} tackled the issue of complexity and devised an efficient $(r,H)$--approachability strategy for the case
where the target set is some polytope. This strategy has a fixed and bounded per-stage computational complexity. Moreover, its rates of convergence
are independent of $d$: they are of the order of $n^{-1/5}$, where $n$ is the number of stages.

On the other hand, \cite{PerQui11}
unified the setups of approachability with full or partial monitoring and characterized
approachable closed (convex) sets using some lifting to the Wasserstein space of probability measures on
$\Delta(\cI)\times\Delta(\cJ)$.

\subsection*{Objectives and technical content of the paper}

This paper focuses on the primal characterization of approachable closed convex sets
with partial monitoring.
First, note that if a closed convex set is $(r,H)$--approachable, then it is also $(r,\Full)$--approachable, and
therefore, by~\eqref{eq:dualfull}, any containing half-space is necessarily one-shot $r$--approachable.
The question is: When is the latter statement a sufficient condition for $(r,H)$--approachability?
The difficulty, as noted already by~\cite{Per11} and recalled at the beginning of Section~\ref{sec:prodmeasureorthants},
is that since the notions of approachability with full or partial monitoring do not coincide,
it can be that a closed convex set is not $(r,H)$--approachable while every containing half-space is one-shot $r$--approachable.

Some situations where the usual dual characterization is indeed sufficient
are formed first, by the cases when the target set is a half-space (with no condition on the game),
and second, by the cases when the target set is an orthant and the structure $(r,H)$ of the game
satisfies the upper-right-corner property.
This first series of results is detailed in Sections~\ref{sec:halfspaces} and~\ref{sec:prodmeasureorthants}.
Some light is then shed in Section~\ref{SE:IncompleteInfo}
on the construction of~\cite{Koh75} for the case of repeated games with incomplete information.

The rest of the paper (Sections~\ref{sec:noproduct}, \ref{SE:Polyhedra}, and~\ref{sec:gen})
relies on no assumption on the structure $(r,H)$ of the game.
It discusses a primal condition based on one-shot approachability of half-spaces
with respect to a modified payoff function $\tr_H$ that encompasses
the links between the signaling structure $H$ and the original payoff function $r$.
Depending on the geometry of the target closed convex set, this
primal condition is stated in the original payoff space (for orthants,
Section~\ref{sec:noproduct}) or in some lifted space (for polytopes or general convex sets, see
Section~\ref{SE:Polyhedra} and~\ref{sec:gen}). We explain in Example~\ref{ex:hidden}
why such a lifting seems inevitable.

We also illustrate how the exhibited primal condition leads to a new (and efficient) strategy for $(r,H)$--approachability
in the case of target sets given by polytopes. (Section~\ref{sec:noproduct} does it for orthants and
the result extends to polytopes via Lemma~\ref{LM:equivalent}.)
This new strategy is based on sequences of (modified) payoffs, as in~\cite{Koh75}, and is not only based
on sequences of signals, as in \cite{Per11,Ext}. The construction of this strategy
also entails some non-linear approachability results (both in full and partial monitoring).

\section{Primal approachability of half-spaces}
\label{sec:halfspaces}

We first focus on half-spaces, not only because they are the simplest convex sets, but because they are the cornerstones of
the primal characterization of \cite{Bla56}.
The following proposition ties one-shot $r$--approachability with $(r,H)$--approachability of half-spaces.

\begin{proposition}
\label{PR:Halfspace}
For all half-spaces $\Chs$, for all signaling structures $H$,
\[
\Chs \ \mbox{is}\  (r,H)\mbox{--approachable} \ \Longleftrightarrow \ \Chs \ \mbox{is one-shot}\ r\mbox{--approachable}\,.
\]
\end{proposition}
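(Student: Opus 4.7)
My plan is to prove each direction separately. The $(\Leftarrow)$ direction will barely use the partial monitoring structure, while the $(\Rightarrow)$ direction reduces to Blackwell's primal characterization of Theorem~\ref{CharacBset}.

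For the $(\Leftarrow)$ direction, assume $\Chs$ is one-shot $r$--approachable, so there exists $x^\star \in \Delta(\cI)$ with $r(x^\star,y) \in \Chs$ for every $y \in \Delta(\cJ)$. I would exhibit as an approachability strategy the stationary one that plays $x^\star$ at every round, regardless of the signals received; since this strategy completely ignores the observations, it is admissible under any signaling structure $H$. Taking $y = \delta_{j_t}$, the expected per-stage payoffs $r'_t = r(x^\star,j_t)$ all lie in $\Chs$, and hence $\bar r'_n \in \Chs$ by convexity of $\Chs$. A Hoeffding--Azuma argument on the bounded martingale difference sequence $r_t - r'_t$ (exactly as in the sentence following~\eqref{eq:cvrate2}) transfers the inclusion to $\bar r_n$, yielding almost sure uniform convergence of $d_{\Chs}(\bar r_n)$ to~$0$.

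For the $(\Rightarrow)$ direction, I would use the standard observation that the $(r,H)$ game is ``harder'' than the $(r,\Full)$ game: any $(r,H)$--approachability strategy $\sigma$ for $\Chs$ can be turned into an $(r,\Full)$--approachability strategy by, after observing $j_n$, sampling a fictitious signal $s_n \sim H(i_n,j_n)$ and feeding the simulated history $(i_1,s_1,\ldots,i_n,s_n)$ into $\sigma$. The distribution of the produced action sequence, and hence of $\bar r_n$, is exactly the same as under $\sigma$ in the $(r,H)$ game, so $\Chs$ is $(r,\Full)$--approachable. Blackwell's primal characterization (Theorem~\ref{CharacBset}) applied to the convex set $\cC = \Chs$ then yields that every half-space containing $\Chs$ is one-shot $r$--approachable, in particular $\Chs$ itself.

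There is no genuine technical obstacle here; the only step requiring a short justification is the signal-simulation reduction in the converse direction. As an alternative route that avoids invoking full monitoring as an intermediary, one could appeal directly to the dual characterization~\eqref{EQ:charac}: writing $\Chs = \{z \in \R^d : \langle a, z\rangle \leq b\}$, the inclusion $\om(x,\bH(y)) \subseteq \Chs$ gives $\min_{x} \langle a, r(x,y) \rangle \leq b$ for each $y \in \Delta(\cJ)$, after which von Neumann's min--max theorem applied to the bilinear form $(x,y)\mapsto \langle a, r(x,y)\rangle$ swaps the quantifiers and produces a single $x^\star$ witnessing one-shot $r$--approachability.
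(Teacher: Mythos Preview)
Your proposal is correct. For $(\Leftarrow)$, both you and the paper resort to the stationary strategy that plays the witness $x^\star$ at every round; the paper treats this direction as immediate.

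For $(\Rightarrow)$, your \emph{alternative} route at the end---invoking the dual characterization~\eqref{EQ:charac} to get $\min_x \langle a, r(x,y)\rangle \leq b$ for every $y$, and then applying von Neumann's min--max theorem to the bilinear form $(x,y)\mapsto\langle a,r(x,y)\rangle$---is exactly the paper's argument. Your primary route, namely the signal-simulation reduction to $(r,\Full)$--approachability followed by Theorem~\ref{CharacBset}, is a valid and somewhat more elementary alternative: it relies only on Blackwell's classical full-monitoring characterization and avoids appealing to the partial-monitoring dual condition of~\cite{Per11}, which is a deeper result. Both approaches ultimately rest on a min--max swap; in your primary route it is hidden inside Theorem~\ref{CharacBset}, whereas the paper makes it explicit.
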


This result is a mere interpolation of two well-known results for the extremal cases where $H = \Full$ and $H = \Dark$.
The former case corresponds to Blackwell's primal characterization. In the latter case, Nature could always
play the same $y \in \Delta(\cJ)$ at all rounds and the player cannot infer the value of this $y$. So he needs to have an action $x \in \Delta(\cI)$ such that $r(x,y)$ belongs to $\Chs$, no matter $y$.

Stated differently, the above proposition indicates that as far as half-spaces are concerned, the approachability
is independent of the signaling structure.

\begin{proof}
Only the direct implication is to be proven, as the converse implication is immediate by the above discussion
about games in the dark. We thus assume that $\Chs$ is $(r,H)$--approachable. Using the characterization~\eqref{EQ:charac}
of $(r,H)$--approachable sets, one then has that
\[ \forall \bh \in \cF,\ \exists \,x \in \Delta(\cI), \ \ \om(x,\bh) \subset \Chs\,, \]
which implies that \[ \forall\, y \in \Delta(\cJ),\ \exists\, x \in \Delta(\cI), \ \ r(x,y) \in \Chs\,.\]
The implication holds because $r(x,y) \in \om(x,\bh)$ when $\bh=\bH(y)$.
Now, let $a \in \R^d$ and $b\in \R$ such that $\Chs = \bigl\{ \omega  \in \R^d : \ \langle \omega, a \rangle \leq b \bigr\}$.
The above property can be further restated as
\[ \forall\, y \in \Delta(\cJ),\ \exists\, x \in \Delta(\cI),\ \ \langle r(x,y),a \rangle \leq b\,,\]
or equivalently, \[\max_{y \in \Delta(\cJ)} \min_{x \in \Delta(\cI)} \langle r(x,y),a \rangle \leq b\,.\]

By von Neumann's min-max theorem, we then have that
\[
\min_{x \in \Delta(\cI)} \max_{y \in \Delta(\cJ)} \langle r(x,y),a \rangle \leq b\,, \quad
\mbox{that is,} \quad \exists\, x_0 \in \Delta(\cI), \ \forall y \in \Delta(\cJ),\ \ \langle r(x_0,y),a \rangle \leq b\,.
\]
This is exactly the one-shot approachability of $\Chs$.
\end{proof}

Since the complement of any $\delta$--neighborhood of a half-space is also a half-space, we get the following additional
equivalence, in view of the respective definitions of excludability and one-shot excludability.

\begin{corollary}
For all half-spaces $\Chs$, for all signaling structures $H$,
\[
\Chs \ \mbox{is}\  (r,H)\mbox{--excludable} \ \Longleftrightarrow \ \Chs \ \mbox{is one-shot} \ r\mbox{--excludable}\,.
\]
\end{corollary}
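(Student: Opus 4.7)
The plan is to reduce the corollary to Proposition~\ref{PR:Halfspace} applied with the roles of the player and Nature interchanged, exploiting exactly the observation made in the paragraph preceding the statement: the complement of a $\delta$--neighborhood of a half-space is (up to a hyperplane of measure zero) another half-space.

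More concretely, first I would parameterize the half-space as $\Chs = \bigl\{ \omega \in \R^d : \langle \omega, a \rangle \leq b \bigr\}$ for some $a \ne 0$ and $b \in \R$. Since $\|\,\cdot\,\|_2$ is used to define neighborhoods, the closed $\delta$--neighborhood equals $\bigl\{ \omega : \langle \omega, a \rangle \leq b + \delta \|a\|_2 \bigr\}$, and its complement is the open half-space $\bigl\{ \omega : \langle \omega, a \rangle > b + \delta \|a\|_2 \bigr\}$. Approachability being an asymptotic notion stated up to $\varepsilon$, passing to the closure $\overline{\Chs^{c,\delta}} = \bigl\{ \omega : \langle \omega, a \rangle \geq b + \delta \|a\|_2 \bigr\}$ does not change whether Nature can approach it (one may absorb the zero-width boundary into the slack $\varepsilon$, or equivalently shrink $\delta$ slightly). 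Thus $\Chs$ is $(r,H)$--excludable if and only if, for some $\delta > 0$, the closed half-space $\overline{\Chs^{c,\delta}}$ is $(r,H)$--approachable by Nature; and, analogously, $\Chs$ is one-shot $r$--excludable if and only if, for some $\delta > 0$, $\overline{\Chs^{c,\delta}}$ is one-shot $r$--approachable by Nature.

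Next I would invoke Proposition~\ref{PR:Halfspace} with the roles of the player and Nature swapped: for any half-space $\cD$ and any signaling structure $H$, $\cD$ is $(r,H)$--approachable by Nature if and only if $\cD$ is one-shot $r$--approachable by Nature. Applied to $\cD = \overline{\Chs^{c,\delta}}$, this yields the desired equivalence after taking the disjunction over $\delta > 0$.

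The only step that deserves a brief justification is the ``symmetric'' version of Proposition~\ref{PR:Halfspace}. In the model, partial monitoring affects only the player, and the proof of Proposition~\ref{PR:Halfspace} relies on: (i) the dual characterization~\eqref{EQ:charac} for the approaching side; and (ii) von Neumann's min-max applied to the half-space's defining linear form. Swapping roles, Nature (as approacher) faces a standard full-monitoring vector-valued game, and the same two ingredients apply verbatim with the min and max exchanged; no new argument is needed. I expect this to be the only mild subtlety in the write-up—making sure that the symmetric invocation is rigorous and that the open/closed boundary issue for the $\delta$--complement is handled cleanly—but neither presents any real difficulty.
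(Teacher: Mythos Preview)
Your proposal is correct and follows essentially the same route as the paper, which disposes of the corollary in the single sentence preceding it (``Since the complement of any $\delta$--neighborhood of a half-space is also a half-space, we get the following additional equivalence, in view of the respective definitions of excludability and one-shot excludability''). Your write-up merely makes explicit the two points the paper leaves implicit: the harmless open/closed boundary issue for the $\delta$--complement, and the fact that when Nature is the approacher she enjoys full monitoring, so the analogue of Proposition~\ref{PR:Halfspace} for her is just Blackwell's classical result.
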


\section{Primal approachability of orthants \\ under the upper-right-corner property}
\label{sec:prodmeasureorthants}

This section is devoted to stating a primal characterization of $(r,H)$--approachable orthants, i.e., sets of the form
\[
\Corth(a) = \bigl\{ a - \omega : \ \omega \in (\R_+)^d \bigr\}
\]
for some $ a \in \R^d$.
Orthants are the key for extension to polytopes, because, as we will discuss later, up to some lifting in higher dimensions,
every polytope can be seen as an orthant.

We start by indicating that the primal characterization stated in the previous section in terms
of the original payoff function $r$ does not
extend directly to general convex sets, not even to orthants---at least without an additional assumption. However, in this section we
state such a sufficient assumption for its extension. We study the most general primal characterization
in Section~\ref{sec:noproduct}, which will involve a modified payoff function for the one-shot approachability
of half-spaces.
\medskip

\paragraph{Counter-example (adapted from~\citealp{Per11}).}
We  show that the equivalence of Proposition~\ref{PR:Halfspace} does not hold in general
if the convex set $\cC$ at hand is not a half-space. To do so, we exhibit a game and a set $\cC$ which is $(r,\Full)$--approachable but not $(r,\Dark)$--approachable.
We set $\cI=\{T,B\}$ and $\cJ=\{L,R\}$, and the payoff function $r$ is given by the matrix
\[
\begin{array}{cccc}
\hline
& & L & R \\
\hline
T & & (0,0) & (1,-1) \\
B & & (-1,1) & (0,0) \\
\hline
\end{array}
\]
We consider the set $\Corth\bigl((0,0)\bigr) = (\R_-)^2$.
This set is $(r,\Full)$--approachable as indicated by the dual characterization~\eqref{eq:dualfull}:
for each $\alpha \in [0,1]$,
\[
r\bigl(\alpha T+(1-\alpha)B,\,\alpha L+(1-\alpha)R\bigr)=(0,0) \in \Corth\bigl((0,0)\bigr)\,.
\]
On the other hand, consider the signaling structure $H = \Dark$, for which the only flag is $\emptyset$.
For all actions of the player, i.e., for all $\alpha \in [0,1]$, it holds that
\begin{multline*}
\om\bigl(\alpha T+(1-\alpha)B,\,\,\emptyset\bigr) =
\Bigl\{ r\bigl(\alpha T+(1-\alpha)B,\,\,y\bigr) : \ y \in \Delta(\cJ) \Bigr\} \\
= \bigl\{ (\lambda,-\lambda)\,;\ \lambda \in [-\alpha,\,1-\alpha] \bigr\} \nsubseteq \Corth\bigl((0,0)\bigr)\,.
\end{multline*}
Therefore, the characterization of $r$--approachable closed convex sets~(\ref{EQ:charac}) does not hold when
playing in the dark.
\medskip

\paragraph{Upper-right-corner property.}
We define the \textsl{upper-right corner function} $R: \Delta(\cI) \times \cF \to \R^d$
of the compatible payoff function $\om$ in a component-wise manner. We write
the coordinates of $R$ as $R = (R_1,\ldots,R_d)$ and set, for all $k \in \{1,\ldots,d\}$,
\[
\forall\, x \in \Delta(\cI), \ \forall\, \bh \in \cF, \qquad
R_k(x,\bh) = \max \Big\{ \omega^k : \ \omega = \bigl(\omega^1,\ldots,\omega^d\bigr) \in \om(x,\bh) \Big\}\,.
\]
The construction of $R$ is illustrated in Figure~\ref{FIG:R}.
Note that the $\ell_2$--norm of $R$ is in general bounded by $M \sqrt{d}$.

\begin{figure}[h!]
\begin{center}
\includegraphics{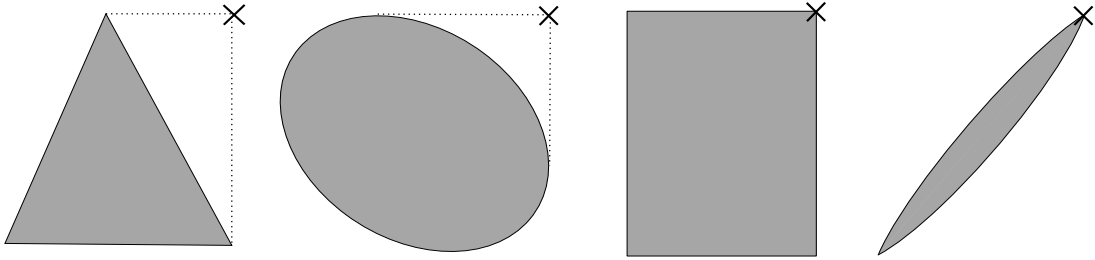}
\caption{\label{FIG:R} Four illustrations of compatible payoff sets $\om(x,\bh)$ and associated upper-right corners $R(x,\bh)$. In the two examples on the left, this upper-right corner does not belong to the set, while it does in the two on the right. (When it is so for all $x$ and $\bh$,
the game is said to have the upper-right-corner property.) }
\end{center}
\end{figure}

The term ``upper-right corner'' comes from the fact that $R(x,\bh)$ is the (component-wise) smallest
$a$ such that $\om(x,\bh) \subseteq \Corth(a)$. Controlling
the distance of $R(x,\bh)$ to the orthant entails controlling the distance of the whole
set $\om(x,\bh)$ to it. Thus, the point $R(x,\bh)$
is in some sense the worst-case payoff vector associated with $\om(x,\bh)$.

Of course, $R(x,\bh)$ is in general not a feasible payoff vector, i.e., $R(x,\bh) \not\in \om(x,\bh)$. We are interested in this section in the case where the upper-right corner is indeed
a feasible payoff---an assumption that we call the \textsl{upper-right-corner property}.
\begin{definition}
\label{def:productproperty}
The game $(r,H)$ with partial monitoring has the \emph{upper-right-corner property} if
\[
\forall\, x \in \Delta(\cI), \ \forall\, \bh \in \cF, \quad R(x,\bh) \in \om(x,\bh)\,.
\]
\end{definition}

Of course, games with full monitoring have the upper-right-corner property, as
for them $\cF$ can be identified with $\Delta(\cJ)$ and $\om$ can be identified with
the function $\{ r \}$
with values in the set of all singleton subsets of $\R^d$.

By definition, in a game with the upper-right-corner property, the $\ell_2$--norm of $R$ is bounded by $M$.

\paragraph{Primal characterization under the upper-right-corner property.}
The following proposition was implicitly used by \cite{Koh75}.
\begin{proposition}\label{PR:Kohlberg}
For all games $(r,H)$ with partial monitoring that have the upper-right-corner property,
for all orthants $\Corth(a)$, where $a \in \R^d$,
\begin{align*}
& \Corth(a) \ \mbox{is}\  (r,H)\mbox{--approachable} \\
\Longleftrightarrow \quad  & \mbox{every half-space $\Chs \supset \Corth(a)$
is one-shot} \  r\mbox{--approachable}.
\end{align*}
\end{proposition}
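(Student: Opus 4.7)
The plan is to establish both implications by leaning on the dual characterization~\eqref{EQ:charac}, sidestepping any explicit strategy construction. The direct implication is essentially a corollary of Blackwell's Theorem~\ref{CharacBset}: from $(r,H)$-approachability of $\Corth(a)$ and~\eqref{EQ:charac}, one gets that for every $\bh \in \cF$ there is $x \in \Delta(\cI)$ with $\om(x,\bh) \subseteq \Corth(a)$; specializing to $\bh = \bH(y)$ and using $r(x,y) \in \om(x,\bH(y))$ yields the dual condition~\eqref{eq:dualfull} for $(r,\Full)$-approachability of $\Corth(a)$, whence the one-shot $r$-approachability of every containing half-space by Theorem~\ref{CharacBset}.

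For the reverse implication the plan is to avoid designing a strategy and instead to derive the dual condition of~\eqref{EQ:charac}. The half-spaces containing $\Corth(a)$ are exactly $\Chs_\lambda = \{w \in \R^d : \langle w, \lambda \rangle \leq \langle a, \lambda \rangle\}$ for $\lambda \in \Delta(\{1,\ldots,d\})$ (nonnegative normals, normalized to sum to one), and the first step is to reformulate their one-shot $r$-approachability in terms of $R$: under the upper-right-corner property, for every such $\lambda$,
\[
\min_{x \in \Delta(\cI)} \max_{y \in \Delta(\cJ)} \langle r(x,y) - a,\, \lambda \rangle \leq 0 \quad \Longleftrightarrow \quad \min_{x \in \Delta(\cI)} \max_{\bh \in \cF} \langle R(x,\bh) - a,\, \lambda \rangle \leq 0\,.
\]
The direction ``$\Leftarrow$'' follows from $r_k(x,y) \leq R_k(x,\bH(y))$ combined with $\lambda \geq 0$; the direction ``$\Rightarrow$'' is precisely where the upper-right-corner property enters, producing, for each $\bh$, some $y^* \in \bH^{-1}(\bh)$ with $r(x,y^*) = R(x,\bh)$.

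The crux is then a double application of Sion's minimax theorem to $g(x,\bh,\lambda) := \langle R(x,\bh) - a,\, \lambda \rangle$ on the compact convex sets $\Delta(\cI) \times \cF \times \Delta(\{1,\ldots,d\})$. One checks that each $R_k(\cdot, \bh)$ is convex (a maximum of linear functions over the polytope $\bH^{-1}(\bh)$) and that each $R_k(x, \cdot)$ is concave (if $y_i^* \in \bH^{-1}(\bh_i)$ achieves $R_k(x, \bh_i)$, then by linearity of $\bH$ the mixture $\alpha y_1^* + (1-\alpha) y_2^*$ lies in $\bH^{-1}(\alpha \bh_1 + (1-\alpha) \bh_2)$ and realizes at least $\alpha R_k(x,\bh_1) + (1-\alpha)R_k(x,\bh_2)$). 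Starting from the rewritten hypothesis $\max_\lambda \min_x \max_\bh g \leq 0$, I would swap $\min_x \max_\bh$ into $\max_\bh \min_x$ via Sion for each fixed $\lambda$ (convex--concave), permute the two resulting suprema, and then swap $\max_\lambda \min_x$ into $\min_x \max_\lambda$ via Sion for each fixed $\bh$ (convex in $x$, linear in $\lambda$). This delivers $\max_\bh \min_x \max_\lambda g \leq 0$, i.e.\ $\forall \bh \in \cF,\ \exists x \in \Delta(\cI),\ R(x,\bh) \in \Corth(a)$; since this forces $\om(x,\bh) \subseteq \Corth(R(x,\bh)) \subseteq \Corth(a)$, it is exactly the dual condition of~\eqref{EQ:charac} and therefore yields $(r,H)$-approachability. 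The main obstacle is the choreography of the three quantifiers: the upper-right-corner property has to be invoked at precisely the right moment (in the direction favorable to the subsequent minimax swaps), and those swaps rest on the convex--concave structure of $R$, which itself hinges on the linearity of $\bH$ in $y$.
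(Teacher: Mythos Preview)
Your argument is correct, but it follows a genuinely different route from the paper's. The paper proves the converse implication \emph{constructively}: assuming one-shot $r$--approachability of all containing half-spaces, it builds a Blackwell-type strategy in the surrogate payoffs $R_n = R(x_n,\bh_n)$, using the upper-right-corner property exactly once to exhibit $y'_{n+1}$ with $R_{n+1} = r(x_{n+1},y'_{n+1})$, which places $R_{n+1}$ in the required half-space and yields Blackwell's condition. This produces an explicit strategy with the rate $2M/\sqrt{n}$ (when flags are observed), and the block-estimation trick handles the realistic signal model. By contrast, you treat the dual characterization~\eqref{EQ:charac} as a black box and reduce everything to two applications of Sion's theorem on $g(x,\bh,\lambda)=\langle R(x,\bh)-a,\lambda\rangle$; the upper-right-corner property is used only to pass from $\max_y \langle r(x,y),\lambda\rangle$ to $\max_{\bh}\langle R(x,\bh),\lambda\rangle$. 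Your convexity/concavity claims on $R$ are exactly those established (in slightly different language) in Lemma~\ref{LM:tr}, and $\cF$ is a compact polytope, so Sion applies legitimately. What the paper's approach buys is an explicit strategy with rates, which feeds directly into the later constructive sections; what your approach buys is a cleaner, fully non-constructive proof that avoids the block/estimation machinery altogether---in spirit it is the specialization of the argument behind Theorem~\ref{TH:orthant}/Proposition~\ref{PR:NonLinearApproach1} to the case where the upper-right-corner property collapses $\tr_H$ back to $r$.
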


Stated differently, using Blackwell's primal characterization of approachability (Theorem~\ref{CharacBset}),
an orthant $\Corth(a)$ is $(r,H)$--approachable in a game $(r,H)$ satisfying the upper-right-corner property
if and only if $\Corth(a)$ is $r$--approachable with full monitoring.

\begin{proof}
The direct implication is proved by applying Proposition~\ref{PR:Halfspace} to any half-space $\Chs \supset \Corth(a)$,
which is in particular $(r,H)$--approachable as soon as $\Corth(a)$ is. The interesting implication is thus the converse one.

So, we assume that every half-space $\Chs \supset \Corth(a)$ is one-shot $r$--approachable and,
following Kohlberg's original proof and inspired by Blackwell's strategy in the case of full monitoring,
we construct an $(r,H)$--approachability strategy of $\Corth(a)$. \medskip

\emph{Flags observed, mixed payoffs obtained.~~}For simplicity, assume first
that after stage $n$, the observation made by the player is not just the random signal $s_n$
but the entire vector of probability distributions over the signals $\bh_n = \bH(y_n)$. (We will indicate below
why this is not a restrictive assumption.) We consider the surrogate payoff vector
$R_n = R(x_n,\bh_n)$, which is a quantity thus observed by the player.
When $\overline{R}_n$ does not already belong to $\Corth(a)$, since the latter set
is convex, the half-space $\Chsind{n}$ defined by
\[
\Chsind{n} = \left\{\omega \in \R^d : \ \   \left\langle\omega-\pi_{\Corth(a)}\bigl(\overline{R}_n\bigr),\,\,\overline{R}_n-\pi_{\Corth(a)}\bigl(\overline{R}_n\bigr)\right\rangle
\leq 0 \right\}
\]
contains $\Corth(a)$, where we recall that $\pi_{\Corth(a)}$ is the orthogonal projection onto $\Corth(a)$.
By assumption, $\Chsind{n}$ is thus one-shot $r$--approachable.
That is, there exists $x_{n+1} \in \Delta(\cI)$ such that
\begin{equation}
\label{eq:ppy-Fig3}
\forall\, y \in \Delta(\cJ), \qquad
\left\langle r(x_{n+1},y)-\pi_{\Corth(a)}\bigl(\overline{R}_n\bigr),\,\,\overline{R}_n-\pi_{\Corth(a)}\bigl(\overline{R}_n\bigr)\right\rangle
\leq 0\,.
\end{equation}
By the upper-right-corner property, $R_{n+1} \in \om(x_{n+1},\bh_{n+1})$, which entails that
there exists $y'_{n+1} \in \Delta(\cJ)$ such that $\bH\bigl(y'_{n+1}\bigr) = \bh_{n+1}$
and $R_{n+1} = r\bigl(x_{n+1},y'_{n+1}\bigr)$. As a consequence, $R_{n+1}$ belongs to $\Chsind{n}$
and the sequence $(R_n)$ satisfies the following condition,
usually referred to as \textsl{Blackwell's condition}:
\[
\left\langle R_{n+1}-\pi_{\Corth(a)}\bigl(\overline{R}_n\bigr),\,\,\overline{R}_n-\pi_{\Corth(a)}\bigl(\overline{R}_n\bigr)\right\rangle
\leq 0\,.
\]
This condition is trivially satisfied when $\overline{R}_n$ already belongs to $\Corth(a)$.
Just as~\eqref{eq:cvrate1} leads to~\eqref{eq:cvrate2}, this condition implies that
$d_{\Corth(a)}\bigl(\overline{R}_n\bigr) \leq 2M/\sqrt{n}$.

Now, $(1/n) \sum_{t=1}^n r(x_t,y_t) \in (1/n) \sum_{t=1}^n \om(x_t,\bh_t)$ and,
as $R$ is the upper-right corner function,
$(1/n) \sum_{t=1}^n \om(x_t,\bh_t) \subseteq \Corth\bigl(\overline{R}_n\bigr)$. That is,
$(1/n) \sum_{t=1}^n r(x_t,y_t)$ is component-wise smaller than $\overline{R}_n$.
Since the distance to the orthant $\Corth(a)$ equals, for all $\omega \in \R^d$,
\begin{equation}
\label{eq:distCorth}
d_{\Corth(a)}(\omega) = \sqrt{ \sum_{k=1}^d \max\{\omega_k-a_k,\,0\}^2 }\,,
\end{equation}
we get that
\[
d_{\Corth(a)}\!\left( \frac{1}{n} \sum_{t=1}^n r(x_t,y_t) \right) \leq
d_{\Corth(a)}\bigl(\overline{R}_n\bigr) \leq \frac{2M}{\sqrt{n}}\,.
\]
Finally, by martingale convergence theorems (e.g., the Hoeffding--Azuma inequality),
the sequence of the distances $d_{\Corth(a)}\bigl(\overline{r}_n\bigr)$ converges uniformly to $0$
with respect to strategies of Nature. The above arguments are illustrated in Figure~\ref{FIG:Orth}. \medskip

\begin{figure}[h!]
\begin{center}
\includegraphics{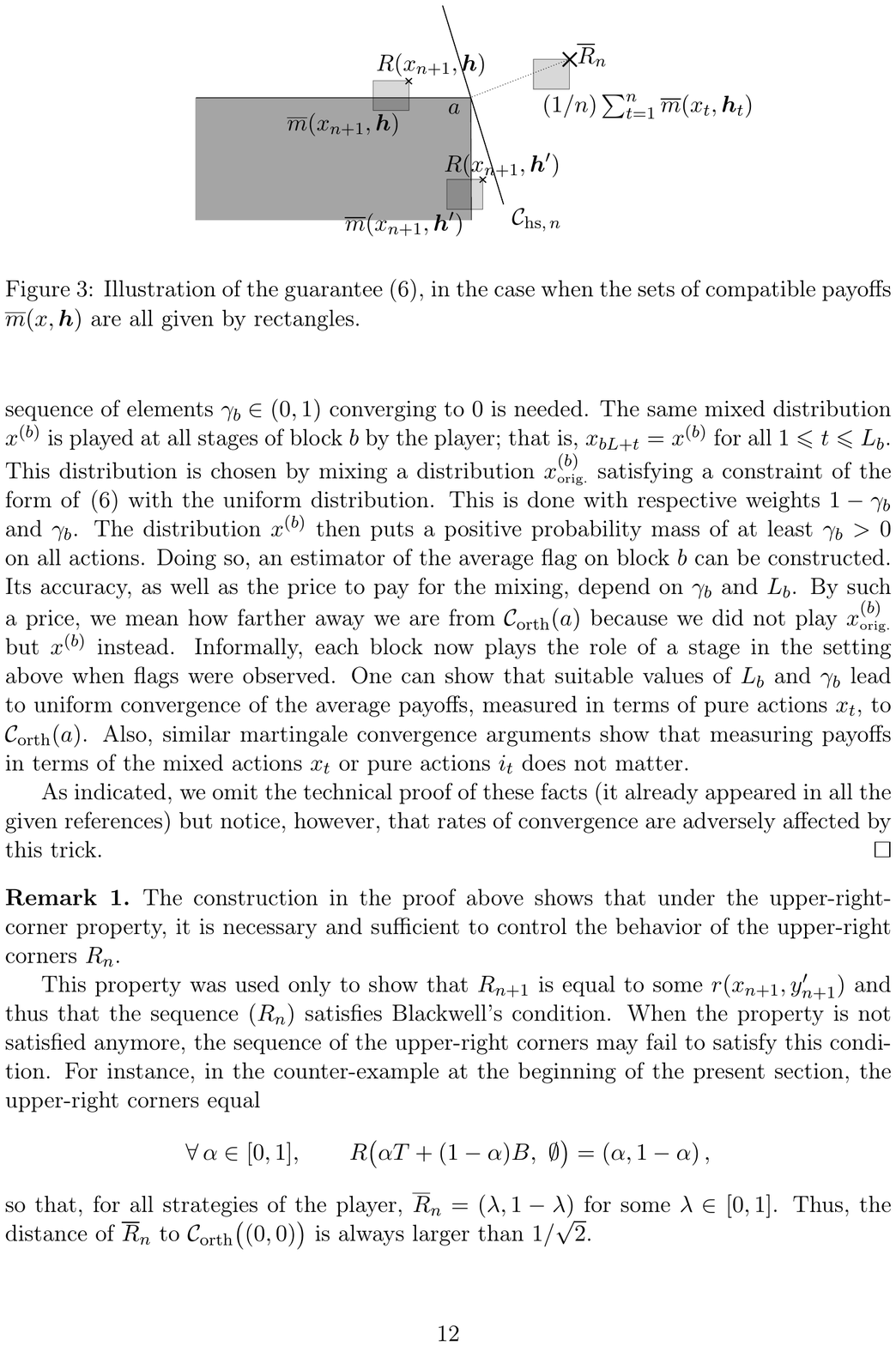}
\caption{\label{FIG:Orth} Illustration of the guarantee~\eqref{eq:ppy-Fig3}, in the case when the sets
of compatible payoffs $\om(x,\bh)$ are all given by rectangles.}
\end{center}
\end{figure}

\emph{Flags not observed, only random signals observed, pure payoffs.~~}
It remains to relax the assumption that the flags $\bh_n$ are observed, while only the signals $s_n$ drawn at random according to
$H(i_n,j_n)$ are. A standard trick in the literature of partial monitoring  (see \citealt{Koh75}, \citealt{MeSoZa94}, \citealt{LMS08})
solves the issue, together with martingale convergence theorems and the fact that the upper-right
corner $R$ is a Lipschitz function for a well-chosen metric over sets (see Lemma~\ref{LM:tr} below). We briefly describe
this trick without working out the lengthy details.
Time is divided into blocks of time indexed by $b=1,2,\ldots$ and with respective (large and increasing) lengths $L_b$.
Another sequence of elements $\gamma_b \in (0,1)$ converging to $0$ is needed.
The same mixed distribution $x^{(b)}$ is played at all stages of block $b$ by the player; that is, $x_{bL+t} = x^{(b)}$ for all $1 \leq t \leq L_b$.
This distribution is chosen by mixing a distribution $x_{\mbox{\tiny orig.}}^{(b)}$ satisfying a constraint of the form of~\eqref{eq:ppy-Fig3}
with the uniform distribution. This is done with respective weights $1-\gamma_b$ and $\gamma_b$.
The distribution $x^{(b)}$ then puts a positive probability mass of at least $\gamma_b > 0$ on all actions. Doing so,
an estimator of the average flag on block $b$ can be constructed. Its accuracy,
as well as the price to pay for the mixing, depend on $\gamma_b$ and $L_b$. By such a price,
we mean how farther away we are from $\Corth(a)$ because we did not play $x_{\mbox{\tiny orig.}}^{(b)}$
but $x^{(b)}$ instead.
Informally, each block now plays the role of a stage in the setting above when flags were observed.
One can show that suitable values of $L_b$ and $\gamma_b$ lead to uniform convergence of the average
payoffs, measured in terms of pure actions $x_t$, to $\Corth(a)$.
Also, similar martingale convergence arguments show that measuring payoffs in terms of the mixed
actions $x_t$ or pure actions $i_t$ does not matter.

As indicated, we omit the technical proof of these facts (it already appeared in all the given references)
but notice, however, that rates of convergence are adversely affected by this trick.
\end{proof}

\begin{remark}{}
The construction in the proof above shows that under the upper-right-corner property, it is necessary and sufficient
to control the behavior of the upper-right corners~$R_n$.
\label{RM:KohlbergLemma}

This property was used only to show that $R_{n+1}$ is equal to some $r(x_{n+1},y'_{n+1})$
and thus that the sequence $(R_n)$ satisfies Blackwell's condition.
When the property is not satisfied anymore, the sequence of the upper-right corners may fail to satisfy this condition.
For instance, in the counter-example at the beginning of the present section, the upper-right corners equal
\[
\forall\, \alpha \in [0,1], \qquad
R\bigl(\alpha T+(1-\alpha)B,\,\,\emptyset\big) = (\alpha,1-\alpha)\,,
\]
so that, for all strategies of the player, $\overline{R}_n = (\lambda,1-\lambda)$ for some $\lambda \in [0,1]$. Thus,
the distance of $\overline{R}_n$ to $\Corth\bigl((0,0)\bigr)$ is always larger than $1/\sqrt{2}$.
\end{remark}

\section{Intermezzo: \\ Kohlberg's repeated games with incomplete information}
\label{SE:IncompleteInfo}

We consider in this section a different, yet related framework,
which is the main focus of~\citet{Koh75}. We first describe
a setting where $d$ games with partial monitoring are to be
played simultaneously, and then establish the formal connection
with Kohlberg's results.

\paragraph{Simultaneous games with partial monitoring.}
We consider $d$ such games, with common action sets $\cI$ and $\cJ$
for the player and Nature and common set $\cH$ of signals, but with possibly different
payoff functions and signaling structures. We index these games by $g$. For each game
$g \in \{1,\ldots,d\}$, the player's payoff function is denoted by
$r^{(g)} : \cI \times \cJ \to \R$ and the signaling structure is given by $H^{(g)} : \cI \times \cJ \to \Delta(\cH)$,
with associated maximal information mapping $\bH^{(g)} : \Delta(\cJ) \to \Delta(\cH)^{\cI}$.

We put some restrictions on the strategies of the player and of Nature.
The player may only choose one action $x_t \in \Delta(\cI)$ at each stage $t$, the same for all games $g$.
On the other hand, Nature can choose different mixed actions $y^{(g)}_t$ in each game $g$,
but these need to be \emph{non-revealing}, that is, they need to induce the same flags. More
formally, they need to be picked in the following set, which we assume to be non-empty:
\begin{equation}
\label{eq:NR}
\NR = \Bigl\{ \bigl( y^{(1)}, \ldots, y^{(d)} \bigl) \in \Delta(\cJ)^d : \ \ \bH^{(1)}\bigl(y^{(1)}\bigr) = \cdots =
\bH^{(d)}\bigl(y^{(d)}\bigr) \Bigr\}\,.
\end{equation}
\medskip

The above framework of simultaneous games can be embedded into an equivalent game that fits the model
studied in the previous sections. Indeed, by linearity of each $\bH^{(g)}$, the set $\NR$ of non-revealing actions
is a polytope, thus it is the convex hull of its finite set of extremal points. We denote the cardinality of
the latter by $K$ and we write its elements as
\[
\cK = \Bigl\{
\bigl( y^{(g)}_1 \bigr)_{1 \leq g \leq d}, \,\, \ldots, \,\, \bigl( y^{(g)}_K \bigr)_{1 \leq g \leq d}
\Bigr\}.
\]
Each $\bigl( y^{(g)} \bigr)_{1 \leq g \leq d} \in \NR$ can then be represented by an element of $\Delta(\cK)$.
Conversely, each $z = (z_k)_{k \leq K} \in \Delta(\cK)$ induces the following element of $\NR$:
\[
Y(z) = \bigl( Y^{(g)}(z) \bigr)_{1 \leq g \leq d} = \sum_{k=1}^K z_k\,\bigl( y^{(g)}_k \bigr)_{1 \leq g \leq d}\,.
\]
So, with no loss of generality, we can assume that $\cK$ is the finite set of actions of Nature and that, given
$z \in \Delta(\cK)$ and $x \in \Delta(\cI)$, the payoff in the game $g$ is $r^{(g)}\bigl(x,Y^{(g)}(z)\bigr)$.

This defines naturally an auxiliary game with linear vector-valued payoff function $r : \Delta(\cI) \times \Delta(\cK) \to \R^d$
and maximal information mapping $\bH : \Delta(\cK) \to \Delta(\cH)^{\cI}$ defined by
\[
r(x,z) = \Bigl( r^{(g)}\bigl(x,Y^{(g)}(z)\bigr) \Bigr)_{1 \leq g \leq d} \qquad \mbox{and} \qquad
\bH(z) = \bH^{(g)}\bigl( Y^{(g)}(z) \bigr) \quad \mbox{for all $g$}.
\]
(The definition of $\bH$ is independent of $g$ by construction, as we restricted Nature to use non-revealing strategies.)
This maximal information mapping $\bH$ corresponds to an underlying signaling structure which we denote by $H : \cI \times \cK \to \Delta(\cH)$.
\medskip

The game $(r,H)$ constructed above satisfies the upper-right-corner property. Indeed, for all $\bh \in \bH\bigl(\Delta(\cK)\bigr)$
and all $x \in \Delta(\cI)$,
\begin{align*}
\om(x,\bh) & = \biggl\{
\Bigl( r^{(1)}\bigl(x,Y^{(1)}(z)\bigr), \, \ldots, \, r^{(d)}\bigl(x,Y^{(d)}(z)\bigr) \Bigr) :
\ \ z \in \Delta(\cK) \ \mbox{s.t.} \ \bH(z)=\bh \biggr\} \\
& = \biggl\{
\Bigl( r^{(1)}\bigl(x,y^{(1)}\bigr), \, \ldots, \, r^{(d)}\bigl(x,y^{(d)}\bigr) \Bigr) : \\
& \qquad \qquad \bigl( y^{(1)},\ldots,y^{(d)} \bigr) \in \Delta(\cJ)^d
\ \mbox{s.t.} \ \bH^{(1)}\bigl( y^{(1)} \bigr) = \cdots = \bH^{(d)}\bigl( y^{(d)} \bigr)
= \bh \biggr\}.
\end{align*}
Because of the separation of the variables in the constraint, the following set, given $\bh$,
\[
\Bigl\{
\bigl( y^{(1)},\ldots,y^{(d)} \bigr) \in \Delta(\cJ)^d :
\ \bH^{(1)}\bigl( y^{(1)} \bigr) = \cdots = \bH^{(d)}\bigl( y^{(d)} \bigr)
= \bh \Bigr\}
\]
is a Cartesian product of subsets of $\Delta(\cJ)$. Thus, its image $\om(x,\bh)$ by the mapping
\[
\bigl( y^{(1)},\ldots,y^{(d)} \bigr) \in \Delta(\cJ)^d \,\, \longmapsto \,\,
\Bigl( r^{(1)}\bigl(x,y^{(1)}\bigr), \, \ldots, \, r^{(d)}\bigl(x,y^{(d)}\bigr) \Bigr)
\]
is also a Cartesian product of closed intervals of $\R$.
In particular, the latter set contains its upper-right corner,
that is, $R(x,\bh) \in \om(x,\bh)$, as claimed.
\medskip

We assume, with no loss of generality, that in these simultaneous games,
Nature maximizes the payoffs and the player minimizes them.
A question that naturally arises---and whose answer will be needed below---is to
determine for which vectors $a = (a_1,\ldots,a_d) \in \R^d$
the player can simultaneously guarantee that his average payoff will be smaller than $a_g$
in the limit in each game $g \in \{1,\ldots,d\}$; that is, to determine
which orthants $\Corth(a)$ are $(r,H)$--approachable.
By the exhibited upper-right-corner property, Proposition~\ref{PR:Kohlberg} shows that
a necessary and sufficient condition for this is that all containing
half-spaces of $\Corth(a)$ be one-shot $r$--approachable.
These half-spaces are parameterized by the convex distributions
$q \in \Delta\bigl(\{1,\ldots,d\}\bigr)$ and are denoted by
\begin{equation}
\label{eq:Kohlaux}
\Chs^{(q)} = \big\{\omega \in \R^d : \ \langle \omega,q \rangle \leq \langle a,q \rangle \bigr\}\,.
\end{equation}
Stated equivalently, the orthant $\Corth(a)$
is $(r,H)$--approachable if and only if the value of the zero-sum game with payoff function
$(x,z) \in \Delta(\cI) \times \Delta(\cK) \mapsto
\langle r(x,z),\,q \rangle$ is smaller than $\langle a, q \rangle$ for all $q \in \Delta(\{1,\ldots,d\})$.
\bigskip

\paragraph{Kohlberg's model of repeated games with incomplete information.}
The setting of repeated games with incomplete information, introduced by \cite{AuMa55},
relies on the same finite family of games $\bigl( r^{(g)}, \, H^{(g)} \bigr)$, where
$g \in \{1,\ldots,d\}$, as described above. They will however not be played simultaneously.
Instead, a single game (state) $G \in \{1,\ldots,d\}$ is drawn according to some
probability distribution $p \in \Delta\bigl(\{1,\ldots,d\}\bigr)$
known by both the player and Nature. Yet only Nature (and not the player) is informed of the true state $G$.
A repeated game with partial monitoring then takes place
between the player and Nature in the $G$--th game. Payoffs are evaluated in expectation
with respect to the random draw of $G$ according to $p$.

For simplicity, we assume that all mappings $\bH^{(g)}$
have the same range\footnote{In full generality, when this is not the case,
Nature may resort to strategies that reveal that the true state $G$ belongs to some strict subset of $\{1,\ldots,d\}$, and
the player must adapt his strategy in correspondence with this knowledge, see~\cite{Koh75}.
But our assumption already captures the basic idea of the use of approachability in this framework and
the alluded technical adaptations are beyond the scope of this paper.} and, with no loss of generality, that $p$ has full support.
Because of these two properties, the considered setting
of repeated games with incomplete information can then be embedded, from the player's viewpoint, into the above-described
setting of $d$ simultaneous games under the restriction that Nature resorts to non-revealing strategies.
Indeed, from the player's viewpoint and because of the identical range of the $\bH^{(g)}$,
the mixed action used by Nature in the game $G$ can be thought of as the $G$--th component of
some vector of mixed actions in the set $\NR$ defined in~\eqref{eq:NR}. We use the notation defined above:
as payoffs are evaluated in expectation,
the payoff function is formed by the inner products
$(x,z) \in \Delta(\cI) \times \Delta(\cK) \mapsto \langle r(x,z),\,p \rangle$.
\medskip

We recall that Nature maximizes the payoff and that the player minimizes it.
For each $q \in \Delta(\{1,\ldots,d\})$, we denote by $u(q)$ the value of the one-shot zero-sum game $\Gamma(q)$ with payoffs
$(x,z) \in \Delta(\cI) \times \Delta(\cK) \mapsto \langle r(x,z),\,q \rangle$.
We show that, as proved in the mentioned references,
the value $U$ of this repeated game, as a function of the distribution $p$, may be larger than $u(p)$ and is
given by $\cav[u](p)$, where $\cav[u]$ is the smallest concave function above~$u$.

First, the so-called splitting lemma shows that $U$ is concave. Therefore, we have $U \geq \cav[u]$.
(For the splitting lemma, see~\citealt{AuMa55} and also~\citealt[Section~V.1]{MeSoZa94} or \citealt{SorZS}.)
The inequality of interest to us is the converse one. Using the concavity of the mapping $\cav[u]$,
\citet[Corollary~2.4]{Koh75} proves that for all $p \in \Delta(\{1,\ldots,d\})$,
there exists some $a_p \in \R^d$ such that
\[
\cav[u](p) = \langle a_p, p \rangle \qquad \mbox{and} \qquad
\forall \, q \in \Delta(\{1,\ldots,d\}), \quad \cav[u](q) \leq \langle a_p, q \rangle\,.
\]
In particular, $u(q) \leq \langle a_p, q \rangle$ for all $q \in \Delta(\{1,\ldots,d\})$.
The equivalence stated after~\eqref{eq:Kohlaux}
shows that $\Corth(a_p)$ is therefore $(r,H)$--approachable. Hence, no matter the strategy of Nature, the payoff in state $G$
is asymptotically smaller than the $G$--th component of $a_p$. (This is true for all realizations of $G$.)
As a consequence, in expectation (with respect to the random choice of $G$), the payoff is smaller than $\langle a_p,\,p \rangle = \cav[u](p)$.
This shows that $U(p) \leq \cav[u](p)$.
\medskip

In conclusion,
\cite{Koh75} implicitly used the consequences of the upper-right-corner property detailed above
when constructing an optimal strategy for the uninformed player.
A close inspection reveals that Lemma~5.4 therein does not hold anymore in the more general framework
without the upper-right-corner property (in particular, one might want to read it again with Remark~\ref{RM:KohlbergLemma} in mind).

\section{Primal approachability of orthants in the general case}
\label{sec:noproduct}

We noted that the primal characterization in terms of one-shot $r$--approachability of
containing half-spaces stated in Proposition~\ref{PR:Kohlberg}
did not extend to games $(r,H)$ without the upper-right-corner property.
We show in this section that it holds true in the general case when one-shot approachability is with respect
to the modified payoff function $\tr_H : \Delta(\cI)\times\Delta(\cJ) \to \R^d$ defined as follows:
\[
\forall\, x \in \Delta(\cI), \ \forall\, y \in \Delta(\cJ), \qquad  \tr_H(x,y) = R\bigl(x,\bH(y)\bigr)\,.
\]
The change of payoff function can be intuitively explained as follows.
As noted in Section~\ref{sec:prodmeasureorthants},
when the target sets are given by orthants (and only because of this),
the behavior of (averages of) sets of compatible payoffs is dictated by their upper-right corners.
Now, the upper-right-corner property indicated that even when measuring payoffs with $r$, the worst-case
payoffs were given by the upper-right corners and that it was thus necessary and sufficient to consider the latter.
If this property does not hold, then evaluating actions with $\tr_H$ enables and forces the consideration of these corners.

Of course, in the case of full monitoring, as follows from the comments
after Definition~\ref{def:productproperty}, no modification takes
place in the payoff function, that is, $\tr_{\Full} = r$.
\bigskip

The main result of this section is the following primal characterization. The rest of this section
will then show how it leads to a new approachability strategy under partial monitoring, based on surrogate payoffs (upper-right corner payoffs)
and not only on signals or on estimated flags, as previously done in the literature (e.g., in the references mentioned in the last part of the proof of Proposition~\ref{PR:Kohlberg}).

\begin{theorem}
\label{TH:orthant}
For all games $(r,H)$ with partial monitoring, for all orthants $\Corth(a)$, where $a \in \R^d$,
\begin{align*}
& \Corth(a) \ \mbox{is} \ (r,H)\mbox{--approachable} \\
\Longleftrightarrow \quad  & \mbox{every half-space $\Chs \supset \Corth(a)$
is one-shot} \ \tr_H\mbox{--approachable}.
\end{align*}
\end{theorem}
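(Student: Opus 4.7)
The plan is to prove the two implications separately: the direct one uses the dual characterization~\eqref{EQ:charac} together with Sion's min-max theorem, while the converse mimics the Blackwell/Kohlberg argument of Proposition~\ref{PR:Kohlberg} with $\tr_H$ replacing $r$. For the direct implication, fix a containing half-space $\Chs = \{\omega : \langle \omega, c \rangle \leq b\}$. The geometry of the orthant forces $c \in (\R_+)^d$ (otherwise $\Chs$ could not contain $\Corth(a)$), and then $b \geq \langle a, c \rangle$. The hypothesis and~\eqref{EQ:charac} provide, for every flag $\bh \in \cF$, an $x \in \Delta(\cI)$ with $\om(x, \bh) \subseteq \Corth(a)$; taking componentwise maxima yields $R(x, \bh) \in \Corth(a)$, whence $\langle \tr_H(x, y), c \rangle \leq b$ for some $x$ depending on $y = \bH^{-1}(\bh)$. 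To swap the quantifiers, I apply Sion's theorem to $(x, y) \mapsto \langle \tr_H(x, y), c \rangle$: for each $\bh$, $R_k(\cdot, \bh)$ is a maximum of affine functions of $x$, hence convex, so $\langle R(\cdot, \bh), c \rangle$ is convex since $c \geq 0$; and for each $x$, $R_k(x, \cdot)$ is concave on $\cF$---given $\bh = \lambda \bh_1 + (1-\lambda) \bh_2$ with maximizing witnesses $y_i$ in the fiber $\bH^{-1}(\bh_i)$, linearity of $\bH$ and of $r_k(x, \cdot)$ make $\lambda y_1 + (1-\lambda) y_2$ an admissible witness for $\bh$---so $\langle \tr_H(x, \cdot), c \rangle$ is concave in $y$, using $c \geq 0$ and linearity of $\bH$.

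For the converse, I first suppose that after stage $n$ the player observes the whole flag $\bh_n = \bH(y_n)$ and maintains the surrogate payoffs $R_n = R(x_n, \bh_n)$. At stage $n+1$, if $\bar R_n \not\in \Corth(a)$, the tangent half-space $\Chsind{n+1}$ at $\pi_{\Corth(a)}(\bar R_n)$ contains $\Corth(a)$ and its outer normal $\bar R_n - \pi_{\Corth(a)}(\bar R_n)$ lies in $(\R_+)^d$ (immediate from~\eqref{eq:distCorth}). The hypothesis provides $x_{n+1}$ with $\tr_H(x_{n+1}, y) \in \Chsind{n+1}$ for every $y$; in particular, $R_{n+1} = \tr_H(x_{n+1}, y_{n+1}) \in \Chsind{n+1}$ whatever $y_{n+1}$ Nature picks. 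This is exactly Blackwell's condition for the sequence $(R_n)$, so the standard squared-distance telescoping yields $d_{\Corth(a)}(\bar R_n) = O(M\sqrt{d}/\sqrt n)$. Because $\om(x_t, \bh_t) \subseteq \Corth(R_t)$, the averaged expected payoffs satisfy $(1/n)\sum_t r(x_t, y_t) \leq \bar R_n$ componentwise, and formula~\eqref{eq:distCorth} for the distance to an orthant yields $d_{\Corth(a)}\bigl((1/n)\sum_t r(x_t, y_t)\bigr) \leq d_{\Corth(a)}(\bar R_n)$. To drop the flag-observing assumption and handle only the random signals $s_n$, I invoke the block-and-mixing reduction recalled at the end of the proof of Proposition~\ref{PR:Kohlberg}, combined with Lipschitz continuity of $R$ in its second argument (as to be recorded in Lemma~\ref{LM:tr}) and Hoeffding-Azuma concentration to pass from mixed-action to pure-action payoffs.

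The main obstacle is the direct implication: $\tr_H$ is in general not bilinear, so Sion's theorem is only applicable after one has identified the right convex-concave structure, and this structure is unlocked precisely by the constraint $c \in (\R_+)^d$ on outer normals of half-spaces containing an orthant. This geometric feature is the reason the statement is formulated specifically for orthants at this stage and, together with the observation that the distance to an orthant is monotone in each coordinate, the reason why the modified payoff $\tr_H$---rather than the original $r$---is the correct object when the upper-right-corner property fails.
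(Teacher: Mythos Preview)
Your proof is correct. The direct implication is essentially identical to the paper's: both invoke the dual characterization~\eqref{EQ:charac} to obtain, for each $y$, an $x$ with $\tr_H(x,y)\in\Corth(a)$, and then swap quantifiers via a Sion--Fan argument exploiting that every containing half-space of an orthant has a nonnegative normal $c\succcurlyeq 0$, which is precisely what makes $\langle c,\tr_H(\cdot,\cdot)\rangle$ convex--concave (the paper records this convexity/concavity in Lemma~\ref{LM:tr}).

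The converse differs in route. The paper does \emph{not} construct a strategy at this point; it proves the contrapositive of Proposition~\ref{PR:NonLinearApproach1} (if the dual condition fails at some $y_0$, then the convex hull of $\{\tr_H(x,y_0):x\in\Delta(\cI)\}$ is strictly separated from $\Corth(a)$ by Hahn--Banach, yielding a non one-shot $\tr_H$--approachable half-space) and then appeals again to~\eqref{EQ:charac} to go from the dual condition back to $(r,H)$--approachability. Your argument instead bypasses the dual characterization entirely on this side and builds directly the Blackwell-type strategy on the surrogate payoffs $R_n=\tr_H(x_n,y_n)$, exactly as the paper does \emph{afterwards} in the subsection ``A new approachability strategy of an orthant under partial monitoring.'' Both are valid: the paper's route is more modular (it cleanly separates the primal/dual equivalence from the approachability question, which is reused later in Section~\ref{sec:gen}), while yours is more self-contained and yields the strategy as a by-product of the proof rather than as a separate development.
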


The proof of this theorem is as follows.
The dual characterization~\eqref{EQ:charac} indicates that a necessary and sufficient condition of
$(r,H)$--approachability for $\Corth(a)$ is that for all $y \in \Delta(\cJ)$, there exists $x \in \Delta(\cI)$
such that $\om\bigl(x,\bH(y)\bigr) \subseteq \Corth(a)$. Since, by construction of $R$,
the smallest orthant (in the sense of inclusion) in which $\om\bigl(x,\bH(y)\bigr)$ is contained is precisely
$\Corth\bigl( \tr_H(x,y) \bigr)$, the necessary and sufficient condition can be restated as the requirement that
for all $y \in \Delta(\cJ)$, there exists $x \in \Delta(\cI)$ such that $\tr_H(x,y) \in \Corth(a)$.
Now, this reformulated dual characterization of approachability in the context of orthants
is seen to be equivalent to the following primal characterization, which concludes the proof of the theorem.

\begin{proposition}
\label{PR:NonLinearApproach1}
For all games $(r,H)$ with partial monitoring, for all orthants $\Corth(a)$, where $a \in \R^d$,
\begin{align*}
& \forall\, y \in \Delta(\cJ), \ \exists\, x \in \Delta(\cI), \quad \tr_H(x,y) \in \Corth(a) \\
\Longleftrightarrow \quad  & \mbox{every half-space $\Chs \supset \Corth(a)$
is one-shot} \ \tr_H\mbox{--approachable}.
\end{align*}
\end{proposition}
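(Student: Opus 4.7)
The plan is to prove both implications at once by recasting each side of the equivalence as the non-positivity of a max-min-max expression for the single convex-concave function
\[
\phi(x,y,q) = \bigl\langle \tr_H(x,y) - a,\,q \bigr\rangle = \sum_{k=1}^d q_k\bigl(\tr_{H,k}(x,y) - a_k\bigr)
\]
defined on $\Delta(\cI) \times \Delta(\cJ) \times \Delta\bigl(\{1,\ldots,d\}\bigr)$, and then to swap the position of $\min_x$ via two applications of Sion's minimax theorem. The first task is to establish the structural properties of $\tr_H$. For each coordinate $k$,
\[
\tr_{H,k}(x,y) = \max\bigl\{ r_k(x,y') : \ y' \in \Delta(\cJ),\ \bH(y')=\bH(y) \bigr\},
\]
so convexity in $x$ is immediate as a maximum of linear functions of $x$. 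Concavity in $y$ is the delicate point, and relies on the set-valued inclusion
\[
\lambda\,\om(x,\bh_1) + (1-\lambda)\,\om(x,\bh_2) \subseteq \om\bigl(x,\,\lambda\bh_1+(1-\lambda)\bh_2\bigr),
\]
which itself is a direct consequence of the linearity of $\bH$ and the bilinearity of $r$. Taking the maximum along the $k$-th coordinate yields concavity of $R_k(x,\cdot)$ on $\cF$, and composition with the linear map $\bH$ transfers this to concavity of $\tr_{H,k}(x,\cdot)$ on $\Delta(\cJ)$. Continuity of $\tr_{H,k}$ follows from the continuous dependence of the value of a linear program on its right-hand side. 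Therefore $\phi$ is convex-concave and continuous in $(x,y)$ for each $q \in \Delta(\{1,\ldots,d\})$, and linear (hence concave) and continuous in $q$ for each $(x,y)$.

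I next reformulate both sides of the equivalence in terms of $\phi$. Because $\Corth(a)$ is unbounded in every negative coordinate direction, the tightest half-spaces containing $\Corth(a)$ are exactly the sets $\Chs^{(q)} = \bigl\{ \omega : \langle \omega,q\rangle \leq \langle a,q\rangle \bigr\}$ for $q \in \Delta\bigl(\{1,\ldots,d\}\bigr)$, and it is enough to check one-shot $\tr_H$--approachability for these tight ones. Using that $\max_q \phi(x,y,q) = \max_k\bigl(\tr_{H,k}(x,y)-a_k\bigr)$ is non-positive precisely when $\tr_H(x,y) \in \Corth(a)$, the left-hand side condition rewrites as
\[
\max_y\,\min_x\,\max_q \phi(x,y,q) \leq 0,
\]
while the right-hand side condition rewrites as
\[
\max_q\,\min_x\,\max_y \phi(x,y,q) \leq 0.
\]
Sion's minimax theorem applied first to the couple $(x,q)$ for frozen $y$ (on the compact convex product $\Delta(\cI) \times \Delta(\{1,\ldots,d\})$) and then to the couple $(x,y)$ for frozen $q$ (on $\Delta(\cI) \times \Delta(\cJ)$), together with the harmless commutation $\max_y\max_q = \max_q\max_y$, gives the chain
\[
\max_y\,\min_x\,\max_q \phi = \max_y\,\max_q\,\min_x \phi = \max_q\,\max_y\,\min_x \phi = \max_q\,\min_x\,\max_y \phi,
\]
which identifies the two sides of the equivalence.

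The step I expect to be the main obstacle is the concavity of $\tr_{H,k}$ in $y$: the function is defined as a maximum, an operation that typically produces convex rather than concave functions, and its concavity hinges on the super-additivity of the set-valued map $\om(x,\cdot)$, itself crucially due to the linearity of $\bH$. Once this structural point is secured, the rest of the argument is bookkeeping with Sion's minimax theorem and with the description of the containing half-spaces of an orthant.
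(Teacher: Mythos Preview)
Your argument is correct and takes a genuinely different route from the paper, especially in the converse direction. The paper proves the forward implication essentially as you do (one application of Sion to $(x,y)$ for a fixed containing half-space), but handles the converse by a geometric separation: given $y_0$ with $\tr_H(x,y_0) \notin \Corth(a)$ for all $x$, it uses continuity and compactness to get a uniform gap, then invokes the monotonicity of $d_{\Corth(a)}$ together with the convexity of $\tr_H(\cdot,y_0)$ to see that the convex hull of $\{\tr_H(x,y_0):x\in\Delta(\cI)\}$ stays $\delta$-away from $\Corth(a)$, and finally applies Hahn--Banach to produce a non-approachable half-space. You instead introduce the extra variable $q \in \Delta(\{1,\ldots,d\})$ parameterizing the tight containing half-spaces and run a \emph{second} Sion swap on $(x,q)$, which lets you prove the two sides are literally equal as max--min--max expressions, dispatching both implications at once. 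Your approach is more symmetric and avoids the separation step entirely; the paper's approach is slightly more constructive (it exhibits the bad half-space via a projection/separating hyperplane) and, more importantly, is the template that the paper reuses verbatim in the infinite-dimensional lifting of Theorem~\ref{TH:primalgeneral}, where your second Sion swap would require $q$ to range over the non-negative cone of $\L^2(\cS)$ and hence would need additional compactness care.
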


Before proving this proposition, we need to state some properties of the function $\tr_H$.
Given two points $a, a' \in \R^d$, the notation $a \preccurlyeq a'$ means that $a$ is component-wise smaller than $a'$---or equivalently, that
$a$ belongs to the orthant $\Corth(a')$.

\begin{lemma}
\label{LM:tr}
The function $\tr_H$ is Lipschitz continuous. It is also convex in its first argument
and concave in its second argument, in the sense that, for all $x,x' \in \Delta(\cI)$,
all $y,y' \in \Delta(\cJ)$, and all $\lambda \in [0,1]$,
\begin{align*}
\tr_H\bigl(\lambda x +(1-\lambda)x',\,y\bigr) & \preccurlyeq \lambda \, \tr_H(x,y) + (1-\lambda) \, \tr_H(x',y) \\
\mbox{and} \qquad \qquad \lambda \, \tr_H(x,y) + (1-\lambda) \, \tr_H(x,y') & \preccurlyeq \tr_H\bigl(x,\,\lambda y + (1-\lambda)y'\bigr)\,.
\end{align*}
\end{lemma}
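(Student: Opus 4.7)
The plan is to establish the three properties—convexity in $x$, concavity in $y$, and Lipschitz continuity—separately, working coordinate-wise through the representation
$$ \tr_{H,k}(x,y) \,=\, \max\bigl\{ r_k(x,y') \,:\, y' \in \Delta(\cJ),\ \bH(y') = \bH(y) \bigr\}, $$
in which the feasible set $\bH^{-1}(\bH(y)) \cap \Delta(\cJ)$ is a compact polytope that does not depend on $x$ and depends only on the flag $\bH(y)$.

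For convexity in the first argument, I would fix $y$ and note that each map $x \mapsto r_k(x,y')$ is linear while the feasible set is independent of $x$; hence $\tr_{H,k}(\cdot,y)$ is a pointwise supremum of linear functions, which is convex. Coordinate-wise convexity is precisely the stated $\preccurlyeq$-inequality. For concavity in the second argument, I would use linearity of $\bH$: given $y^*$ with $\bH(y^*)=\bH(y)$ and $y^{**}$ with $\bH(y^{**})=\bH(y')$, the combination $\lambda y^*+(1-\lambda)y^{**}$ is feasible for the program defining $\tr_{H,k}(x,\lambda y+(1-\lambda)y')$. Therefore
$$ \tr_{H,k}\bigl(x,\lambda y+(1-\lambda)y'\bigr) \,\geq\, r_k\bigl(x,\lambda y^*+(1-\lambda)y^{**}\bigr) \,=\, \lambda\, r_k(x,y^*) + (1-\lambda)\, r_k(x,y^{**}), $$
by linearity of $r_k$ in its second argument. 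Separately maximizing the right-hand side over $y^*$ and $y^{**}$ yields the component-wise inequality, which is exactly the asserted $\preccurlyeq$-concavity.

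The main obstacle is Lipschitz continuity, because the feasible set varies with $y$. Lipschitz continuity in $x$ uniformly in $y$ is immediate from the uniform bound $|r_k(x,y')-r_k(x',y')|\leq M\|x-x'\|_1$ (which passes under the maximum). For the $y$-direction, since $\bH$ is linear (hence Lipschitz), it suffices to prove that $\bh \mapsto R_k(x,\bh)$ is Lipschitz on the polytope $\cF$ with a constant uniform in $x$. Here I would invoke Hoffman's lemma for polyhedral systems, which provides a constant $L$ depending only on the linear representation of $\bH$ and $\Delta(\cJ)$ such that, for all $\bh,\bh'\in\cF$ and every $y^*\in\bH^{-1}(\bh)\cap\Delta(\cJ)$, there exists $y^{*'}\in\bH^{-1}(\bh')\cap\Delta(\cJ)$ with $\|y^*-y^{*'}\|\leq L\|\bh-\bh'\|$. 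Taking $y^*$ to be an optimizer for $(x,\bh)$ gives
$$ R_k(x,\bh)-R_k(x,\bh') \,\leq\, r_k(x,y^*) - r_k(x,y^{*'}) \,\leq\, M\,L\,\|\bh-\bh'\|, $$
and swapping the roles of $\bh$ and $\bh'$ yields the reverse bound. An equivalent and perhaps cleaner route is to invoke LP duality: for fixed $x$, the value $R_k(x,\bh)$ is the optimum of a linear program in $y'$ whose dual has a feasible polytope independent of $\bh$ and an objective affine in $\bh$; the value is therefore a minimum of finitely many affine functions of $\bh$, hence piecewise affine—and Lipschitz—on $\cF$, with a constant controlled uniformly in $x$ by the diameter of the dual polytope.
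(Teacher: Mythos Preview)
Your argument is correct. The convexity/concavity part is essentially the paper's own: the paper phrases it in terms of set inclusions for $\om$---namely $\om(\lambda x+(1-\lambda)x',\bh)\subseteq \lambda\,\om(x,\bh)+(1-\lambda)\,\om(x',\bh)$ and the reverse inclusion in the second variable---and then takes upper-right corners, which is exactly your coordinate-wise computation unwound.

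The Lipschitz part, however, is handled differently. The paper does not argue directly via Hoffman's lemma or LP duality; instead it invokes a structural decomposition proved in an earlier companion paper, writing
\[
\om\bigl(x,\bH(y)\bigr)=\sum_{b\in\mathcal{B}}\phi_b\bigl(\bH(y)\bigr)\,r\bigl(x,\bH^{-1}(b)\bigr)
\]
for finitely many Lipschitz coefficients $\phi_b$, and then concludes Lipschitzness of $(x,y)\mapsto\om(x,\bH(y))$ in Hausdorff distance, which transfers to $\tr_H$ because taking upper-right corners is Lipschitz for that distance. Your Hoffman-lemma route is more self-contained and avoids the external citation, at the cost of importing a classical stability result for linear systems; it also makes the uniform-in-$x$ constant transparent. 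Your LP-duality alternative is valid in spirit but needs a word of caution: the dual feasible region is an unbounded polyhedron (not a polytope), so the ``diameter'' you mention is not finite; what you actually need is a uniform bound on the \emph{optimal} dual vertices as $x$ ranges over $\Delta(\cI)$, which follows, e.g., from the fact that the primal objective vector $r_k(x,\cdot)$ lives in a compact set---or, more simply, just stick with the Hoffman argument, which already gives the uniform constant cleanly.
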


\begin{proof}
Convexity and concavity follow from the concavity and the convexity of $\om$ for inclusion. Formally,
it follows from the very definition~\eqref{def:om} of $\om$ and from the linearity of $r$ and $\bH$ that,
for all $x,x' \in \Delta(\cI)$, all $\bh, \bh' \in \cF$, and $\lambda \in [0,1]$,
\begin{align*}
\om\bigl(\lambda x +(1-\lambda)x', \,\bh\bigr) & \subseteq \lambda\,\om(x,\bh) + (1-\lambda)\,\om(x',\bh) \\
\mbox{and} \qquad \qquad \lambda\,\om(x,\bh) + (1-\lambda)\,\om(x,\bh') & \subseteq \om\bigl(x,\, \lambda\bh+(1-\lambda)\bh' \bigr) \,.
\end{align*}
The second part of the lemma follows by taking upper-right corners, which
is a linear and non-decreasing operation (for the respective partial orders $\subseteq$ and $\preccurlyeq$).

As for the Lipschitz property of $\tr_H$, it follows from a rewriting of $\om\bigl(x,\bH(y)\bigr)$ as
\[
\om\bigl(x,\bH(y)\bigr) = \sum_{b \in \mathcal{B}} \phi_b\bigl( \bH(y) \bigr) \, r\bigl( x, \, \bH^{-1}(b) \bigr)\,,
\]
where $\mathcal{B}$ is a finite subset of $\cF$, the $\phi_b$ are Lipschitz functions $\cF \to [0,1]$, and
$\bH^{-1}$ is the pre-image function of $\bH$, which takes values in the set of compact subsets of $\Delta(\cJ)$.
This rewriting was proved in~\citet[Lemma~6.1 and Remark~6.1]{Ext}.
We equip the set of compact subsets of the Euclidian ball with center $(0,\ldots0)$ and radius $M$,
in which $\om$ takes its values, with the Hausdorff distance.
For this distance, $x \mapsto r\bigl( x, \, \bH^{-1}(b) \bigr)$ is $M$--Lipschitz for each $b \in \mathcal{B}$.
All in all, given the boundedness of the $\phi_b$ and of $r$, the mapping $(x,y) \mapsto \om\bigl(x,\bH(y)\bigr)$ is also Lipschitz continuous.
Since taking the upper-right corner is a Lipschitz mapping as well (for the Hausdorff distance),
we get, by composition, the desired Lipschitz property for $\tr_H$.
\end{proof}

We are now ready to prove Proposition~\ref{PR:NonLinearApproach1}.
(Note that it needs a proof and that it is not implied by the various results
discussed in Section~\ref{sec:prodmeasureorthants}. Indeed, $(\tr_H,H)$ is an auxiliary game which, by construction, has the upper-right-corner
property, but $\tr_H$ is not linear, while linearity of the payoff function was a crucial feature of the setting
studied therein.)

\begin{proof}[Proof of Proposition~\ref{PR:NonLinearApproach1}]
We start with the direct implication and consider some containing half-space $\Chs$. The latter is parameterized by
$\alpha \in \R^d$ and $\beta \in \R$, and equals
\[
\Chs = \bigl\{ \omega  \in \R^d : \ \langle \alpha, \, \omega \rangle \leq \beta \bigr\}\,.
\]
Since $\Chs$ contains the orthant $\Corth(a)$, there are sequences  $(\omega_n)$ in $\Chs$ with components tending to
$-\infty$. Therefore, we necessarily have that $\alpha \succcurlyeq 0$. The convexity/concavity of $\tr_H$ in the sense of
$\preccurlyeq$ thus entails that the function $G_{\alpha,\beta} : (x,y) \longmapsto \bigl\langle \alpha,\,\tr_H(x,y) \bigr\rangle - \beta$
is also convex/concave. The continuity
of $G_{\alpha,\beta}$ follows from the one of $\tr_H$. The Sion--Fan lemma
applies and guarantees that
\[
\min_{x \in \Delta(\cI)} \, \max_{y \in \Delta(\cJ)} \, G_{\alpha,\beta}(x,y) =
\max_{y \in \Delta(\cJ)} \, \min_{x \in \Delta(\cI)} \, G_{\alpha,\beta}(x,y)\,,
\]
(the suprema and infima are all attained and are denoted by maxima and minima). Now, by assumption,
for all $y \in \Delta(\cJ)$, there exists $x \in \Delta(\cI)$ such that $\tr_H(x,y) \in \Corth(a)$. This means that
the above $\max \min G_{\alpha,\beta}$ is non-positive. Putting all things together, we have proved that
\[
\min_{x \in \Delta(\cI)} \, \max_{y \in \Delta(\cJ)} \, G_{\alpha,\beta}(x,y) \leq 0\,.
\]
That is, there exists $x_0 \in \Delta(\cI)$, e.g., the element attaining the above maximum,
such that for all $y \in \Delta(\cJ)$, one has $G_{\alpha,\beta}(x_0,y) \leq 0$, or, equivalently,
$\tr_H(x_0,y) \in \Corth(a)$. This property is exactly the stated one-shot $\tr_H$--approachability
of $\Corth(a)$. \medskip

Conversely, assume that there exists some $y_0 \in \Delta(\cJ)$ such that, for all $x \in \Delta(\cI)$, one has
$\tr_H(x,y_0) \not\in \Corth(a)$. By continuity of $\tr_H$ and closedness of $\Corth(a)$,
there exists some $\delta>0$ such that $d_{\Corth(a)}\bigl( \tr_H(x,y_0) \bigr) \geq \delta$ for all $x \in \Delta(\cI)$.
Now, as indicated around~\eqref{eq:distCorth}, the distance to $\Corth(a)$ is non-decreasing for $\preccurlyeq$.
In view of the convexity of $\tr_H$ in its first argument, this shows that we also have
$d_{\Corth(a)}(z) \geq \delta$ for all elements $z$ of the convex hull $\cC_{\tr_H,y_0}$
of the set $\bigl\{ \tr_H(x,y_0) : \,\, x \in \Delta(\cI) \bigr\}$. That is, the closed convex sets
$\cC_{\tr_H,y_0}$, which is compact, and $\Corth(a)$, which is closed, are disjoint and thus, by the Hahn--Banach theorem, are strictly separated
by some hyperplane. One of the two half-spaces thus defined, namely, the one not containing $\cC_{\tr_H,y_0}$,
is not one-shot $\tr_H$--approachable.
\end{proof}

\subsection*{A new approachability strategy of an orthant under partial monitoring}

Theorem~\ref{TH:orthant} suggests an approachability strategy based on surrogate payoffs and not only on the information gained, i.e.,
based on the mapping $\tr_H$ and not only on the signaling structure $H$ (and the estimated flags).
The first approach was already considered by~\cite{Koh75}
while other works, like \cite{Per11} and \cite{Ext}, resorted to the second one.
The considered strategy is an adaptation of Blackwell's strategy (which was recalled after the statement of
Theorem~\ref{CharacBset}): such an adaptation is possible as the latter strategy only relies
on the one-shot approachability of half-spaces, which is satisfied here with the surrogate payoffs $\tr_H$.

\paragraph{Description and convergence analysis of the strategy.}
As in the proof of Proposition~\ref{PR:Kohlberg}, we assume initially that flags $\bh_t = \bH(y_t)$ are observed
at the end of each round $t$ and that mixed payoffs are to be controlled.
The player then knows his mixed payoffs $\tr_{H,t} := \tr_H(x_t,y_t) = R\bigl(x_t,\bh_t\bigr)$
and aims at controlling his average payoffs, which we recall are denoted by $\overline{\tr}_{H,n}$.
Similarly to what was done in the proof of Proposition~\ref{PR:Kohlberg}, the one-shot $\tr_H$--approachability
of the containing half-spaces of $\Corth(a)$ entails that for each round $n$, there exists $x_{n+1} \in \Delta(\cI)$
such that
\[
\forall\, y \in \Delta(\cJ), \qquad
\left\langle \tr_H(x_{n+1},y)-\pi_{\Corth(a)}\bigl(\overline{\tr}_{H,n}\bigr),
\,\,\overline{\tr}_{H,n}-\pi_{\Corth(a)}\bigl(\overline{\tr}_{H,n}\bigr)\right\rangle
\leq 0\,.
\]
The sequence $\bigl(\overline{\tr}_{H,n}\bigr)$ thus satisfies Blackwell's condition
and as a result we get
\[
d_{\Corth(a)}\!\left( \frac{1}{n} \sum_{t=1}^n \tr_H(x_t,y_t) \right) \leq \frac{2M \sqrt{d}}{\sqrt{n}}\,.
\]
(See again the proof of Proposition~\ref{PR:Kohlberg} for this derivation and keep in mind that in the present setting
where the upper-right-corner property is not satisfied, $R$ is only bounded in $\ell_2$--norm by $M\sqrt{d}$.)
Since $r(x_t,y_t) \in \om\bigl(x_t,\bH(y_t)\bigr) \subseteq \Corth\bigl(\tr_H(x_t,y_t)\bigr)$,
we get $r(x_t,y_t) \preccurlyeq \tr_H(x_t,y_t)$ and, in view again of~\eqref{eq:distCorth},
\[
d_{\Corth(a)}\!\left( \frac{1}{n} \sum_{t=1}^n r(x_t,y_t) \right) \leq
d_{\Corth(a)}\!\left( \frac{1}{n} \sum_{t=1}^n \tr_H(x_t,y_t) \right)
\leq \frac{2M \sqrt{d}}{\sqrt{n}}\,.
\]

The same trick of playing i.i.d.\ in blocks as in the second part of
the proof of Proposition~\ref{PR:Kohlberg}, together with martingale convergence arguments,
relaxes the assumptions of flags being observed and payoffs being evaluated with mixed actions,
leading to the desired $(r,H)$--approachability strategy. (This is where we need
the Lipschitzness properties stated in Lemma~\ref{LM:tr} and its proof.) A more careful
study, which we omit here for the sake of brevity, shows that $(r,H)$--approachability
takes place at a $n^{-1/5}$--rate.

\paragraph{What we proved in passing.}
We proved in a constructive way that when an orthant is $(r,H)$--approachable, it is also $\bigl(\tr_H,\Full)$--approachable.

Conversely, assume that the equivalent conditions in Theorem~\ref{TH:orthant} are not satisfied, i.e., that
the orthant at hand, $\Corth(a)$, is not $(r,H)$--approachable. Then, (the proof of) Proposition~\ref{PR:NonLinearApproach1}
indicates that there exists some $y_0 \in \Delta(\cJ)$ such that the set $\Corth(a)$
and the convex hull of $\bigl\{ \tr_H(x,y_0), \,\, x \in \Delta(\cI) \bigr\}$ are strictly
separated. This implies in particular that $\Corth(a)$ is $\bigl(\tr_H,\Full\bigr)$--excludable, and thus is not
$\bigl(\tr_H,\Full\bigr)$--approachable.

Putting all things together, we have proved the following equivalence:
\begin{align*}
& \Corth(a) \ \mbox{is} \ (r,H)\mbox{--approachable} \\
\Longleftrightarrow \quad & \Corth(a) \ \mbox{is} \ \bigl(\tr_H,\Full\bigr)\mbox{--approachable} \\
\Longleftrightarrow \quad & \Corth(a) \ \mbox{is not} \ \bigl(\tr_H,\Full\bigr)\mbox{--excludable.}
\end{align*}
Note that the $\bigl(\tr_H,\Full\bigr)$--approachability is a form of non-linear approachability,
by which we mean that the function $\tr_H$ is not linear and yet, approachability is possible.
This result could be generalized (but we omit the description of the extension for the sake of concision).

\paragraph{On the computational complexity of the above-described strategy.}
The strategy we have exhibited reduces to solving, at each stage, a program
of the form
\[
\min_{x_{n+1} \in \Delta(\cI)} \biggl\{ \max_{y \in \Delta(\cJ)} \,\,
\bigl\langle \tr_H(x_{n+1},y) - \beta, \, \alpha \bigr\rangle \biggr\}
\]
for some vectors $\alpha,\beta \in \R^d$.
At first sight, it cannot be written as a finite linear program as $\tr_H$ is not a linear function of its arguments.
However, as proved in~\citet[Section~7.1]{Ext}, the function $\tr_H$ is actually
piecewise linear; that is, there exist some finite liftings of $\Delta(\cI)$ and $\Delta(\cJ)$ with respect to which $\tr_H$ is linear.
(These liftings only need to be computed once, before the game starts.)
Moreover, the per-step computational complexity of our strategy is constant (in fact, it is polynomial in the sizes of these
liftings; see \citealt{Ext} for more details).

\section{Primal approachability of polytopes}
\label{SE:Polyhedra}

Recall that a convex set $\Cpoly$ is a polytope if it is the intersection of a finite number of half-spaces $\bigl\{ \omega  \in \R^d : \ \langle \omega, a_\ell \rangle \leq b_\ell \bigr\}$, for $a_\ell,b_\ell \in \R^d$ and $\ell$ ranging in some finite set $\cL$. That is,
\begin{equation}
\label{eq:Cpoly}
\Cpoly = \bigcap_{\ell \in \cL} \left\{ \omega  \in \R^d : \ \langle \omega, a_\ell \rangle \leq b_\ell \right\} = \left\{ \omega  \in \R^d:
\ \max_{\ell \in \cL} \langle \omega, a_\ell \rangle - b_\ell \leq 0 \right\}.
\end{equation}
The following lemma (which is a mere exercice of rewriting)
states that an approachability problem of a polytope can be transformed into an approachability problem of some negative orthant.
We denote by $(0)_{\cL} = (0,\ldots,0)$ the null vector of $\R^{\cL}$. The negative orthant of $\R^{\cL}$ is then denoted by
$\Corth\bigl( (0)_{\cL} \bigr)$.
\begin{lemma}
\label{LM:equivalent}
The convex polytope $\Cpoly$ defined in~\eqref{eq:Cpoly}
is $(r,H)$--approachable if and only if the negative orthant $\Corth\bigl( (0)_{\cL} \bigr)$ is $(s,H)$--approachable, where
the vector-valued payoff function $s : \Delta(\cI) \times \Delta(\cJ) \to \R^{\cL}$ is defined as
\begin{equation}
\label{eq:s}
s(x,y) = \Big[\langle r(x,y), a_\ell \rangle - b_\ell  \Big]_{\ell \in \cL}\,.
\end{equation}
\end{lemma}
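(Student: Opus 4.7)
The plan is to reduce the $(r,H)$-approachability of $\Cpoly$ to the $(s,H)$-approachability of $\Corth\bigl((0)_\cL\bigr)$ via the affine map
$$\Phi : \omega \in \R^d \,\longmapsto\, \bigl[\langle \omega, a_\ell\rangle - b_\ell\bigr]_{\ell\in\cL} \in \R^{\cL}.$$
By construction $s = \Phi \circ r$; since $\Phi$ is affine, averaging commutes with $\Phi$ and thus $\overline{s}_n = \Phi(\overline{r}_n)$ on every realization of the play. Moreover, both games $(r,H)$ and $(s,H)$ have the same action sets $\cI, \cJ$ and the same signaling structure $H$, so the sets of behavioral strategies $\sigma$ of the player and $\tau$ of Nature coincide in both games. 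The induced probability distributions $\P_{\sigma,\tau}$ on $(\cI \times \cH \times \cJ)^{\N}$ are therefore identical.

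By the very definition~\eqref{eq:Cpoly} of $\Cpoly$, one has the pointwise set equivalence $\omega \in \Cpoly \Longleftrightarrow \Phi(\omega) \in \Corth\bigl((0)_\cL\bigr)$. To lift this qualitative equivalence to the quantitative statements involved in the definition of $\varepsilon$-approachability, I would prove that the distances to the two target sets are equivalent up to multiplicative constants. The \emph{easy direction} is that $\Phi$ is $L$-Lipschitz with $L$ depending only on $\max_{\ell} \|a_\ell\|_2$, and $\Phi(\Cpoly) \subseteq \Corth\bigl((0)_\cL\bigr)$; hence
$$d_{\Corth((0)_\cL)}\bigl(\Phi(\omega)\bigr) \leq L\, d_{\Cpoly}(\omega).$$
The \emph{converse direction} relies on Hoffman's lemma for polyhedra: there exists a constant $C>0$, depending only on the family $(a_\ell, b_\ell)_{\ell \in \cL}$, such that for every $\omega \in \R^d$,
$$d_{\Cpoly}(\omega) \;\leq\; C\, \bigl\| \bigl[\max\!\bigl(\langle \omega, a_\ell\rangle - b_\ell,\,0\bigr)\bigr]_{\ell\in\cL} \bigr\|_2 \;=\; C\, d_{\Corth((0)_\cL)}\bigl(\Phi(\omega)\bigr),$$
where the last equality follows from formula~\eqref{eq:distCorth} applied to the orthant $\Corth\bigl((0)_\cL\bigr)$.

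Combining these two bounds with $\overline{s}_n = \Phi(\overline{r}_n)$: any $\varepsilon$-approachability strategy of $\Cpoly$ in $(r,H)$ is automatically an $(L\varepsilon)$-approachability strategy of $\Corth\bigl((0)_\cL\bigr)$ in $(s,H)$, and, conversely, any $\varepsilon'$-approachability strategy of $\Corth\bigl((0)_\cL\bigr)$ in $(s,H)$ is a $(C\varepsilon')$-approachability strategy of $\Cpoly$ in $(r,H)$. The same rescaling applies to the almost-sure convergence rates. Taking $\varepsilon, \varepsilon' \to 0$ yields the stated equivalence.

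The only non-routine ingredient is Hoffman's lemma in the converse distance bound; the rest is, as the lemma's phrasing suggests, a mere rewriting exploiting the linearity of $r$, the affinity of $\Phi$, and the fact that the two games share the same information structure so that strategies transfer literally between them.
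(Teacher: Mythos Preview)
Your proof is correct and follows essentially the same route as the paper: the paper also reduces the lemma to the equivalence of the distances $d_{\Cpoly}(\,\cdot\,)$ and $d_{\Corth((0)_\cL)}\bigl(T(\,\cdot\,)\bigr)$, citing this equivalence from an external reference, whereas you supply the proof of that equivalence directly via Lipschitz continuity of $\Phi$ in one direction and Hoffman's lemma in the other. The only difference is one of detail rather than strategy.
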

\begin{proof}
The result follows from the equivalence (see, e.g., property~3 in Appendix A.1 of~\citealt{Per11b}) of the distances
to $\Cpoly$ given by
\[
d_{\Cpoly} \qquad \mbox{and} \qquad d_{\Corth( (0)_{\cL} )}\bigl( T(\,\cdot\,)\bigr)\,,
\]
where $T : \R^d \to \R^{\cL}$ is the linear transformation
$\omega \mapsto \bigl[\langle \omega, a_\ell \rangle - b_\ell\bigr]_{\ell \in \cL}$.
\end{proof}

Theorem~\ref{TH:orthant} can then be rewritten, using Lemma~\ref{LM:equivalent} above,
to provide the desired primal characterization of polytopes.

\begin{corollary}
\label{cor:polyh}
Consider the convex polytope $\Cpoly$ given by~\eqref{eq:Cpoly},
together with the payoff function $s$ defined in~\eqref{eq:s}. Then,
\begin{align}
\label{EQ:proppolyhe}
& \Cpoly \ \mbox{is} \ (r,H)\mbox{--approachable} \\
\nonumber
\Longleftrightarrow \quad & \mbox{every containing half-space of} \ \Corth\bigl( (0)_{\cL} \bigr)
\ \mbox{is one-shot} \ \ts_H\mbox{--approachable.}
\end{align}
\end{corollary}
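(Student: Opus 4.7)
The plan is to combine the two ingredients at hand: the reduction of Lemma~\ref{LM:equivalent}, which identifies $(r,H)$--approachability of the polytope $\Cpoly$ with $(s,H)$--approachability of the negative orthant $\Corth\bigl((0)_{\cL}\bigr)$ in the lifted space $\R^{\cL}$, and Theorem~\ref{TH:orthant}, which provides a primal characterization of approachable orthants valid in \emph{any} game with partial monitoring. The conclusion will follow from simply chaining these two equivalences.

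First, I invoke Lemma~\ref{LM:equivalent} to rewrite the left-hand side of the stated equivalence: $\Cpoly$ is $(r,H)$--approachable if and only if $\Corth\bigl((0)_{\cL}\bigr)$ is $(s,H)$--approachable, where $s$ is the vector-valued payoff function defined in~\eqref{eq:s} and taking values in $\R^{\cL}$. Note that the signaling structure $H$ is unchanged in this reduction, as it only prescribes what the player observes, not how his rewards are measured. In particular, the maximal information mapping $\bH$ and the sets of compatible payoffs are the same as before (only the coordinates in which payoffs live have changed via the linear transformation $T$ appearing in the proof of Lemma~\ref{LM:equivalent}).

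Next, I apply Theorem~\ref{TH:orthant} to the auxiliary game $(s,H)$, in the space $\R^{\cL}$, with the specific choice of orthant $\Corth\bigl((0)_{\cL}\bigr)$. This yields that $\Corth\bigl((0)_{\cL}\bigr)$ is $(s,H)$--approachable if and only if every containing half-space $\Chs \supset \Corth\bigl((0)_{\cL}\bigr)$ is one-shot $\ts_H$--approachable, where $\ts_H$ is the modified payoff function built from $s$ and $H$ exactly as prescribed at the beginning of Section~\ref{sec:noproduct}. Chaining this with the previous equivalence produces~\eqref{EQ:proppolyhe}.

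There is no genuine obstacle in this argument; the proof is essentially a one-line composition. The only point worth double-checking is that Theorem~\ref{TH:orthant} legitimately applies to the game $(s,H)$: this is immediate since the theorem is stated for \emph{arbitrary} games with partial monitoring and \emph{arbitrary} orthants, and $s$ inherits from $r$ the required structural properties (multi-linear extension to $\Delta(\cI) \times \Delta(\cJ)$, boundedness), while $H$ remains a bona fide signaling structure throughout. The slight subtlety is merely notational: the symbol $\ts_H$ in the statement refers to the modified payoff function attached to $s$ (not to $r$), and is what the construction of Section~\ref{sec:noproduct} produces when fed with $s$ in place of $r$.
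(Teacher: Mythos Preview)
Your proposal is correct and follows exactly the same route as the paper: the corollary is obtained by chaining Lemma~\ref{LM:equivalent} (reducing approachability of $\Cpoly$ to approachability of the negative orthant for the lifted payoff $s$) with Theorem~\ref{TH:orthant} applied to the game $(s,H)$. The paper states this in a single sentence just before the corollary; your write-up is simply a more detailed unpacking of that sentence, and the additional remarks you make about the signaling structure being unchanged and about $\ts_H$ referring to the modification of $s$ rather than $r$ are accurate and helpful.
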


When $\Cpoly$ is indeed $(r,H)$--approachable, the results of the previous section provide
an approachability strategy of $\Corth\bigl( (0)_{\cL} \bigr)$ based on the transformed
payoffs $\ts_H$. This strategy also approaches $\Cpoly$ in view of Lemma~\ref{LM:equivalent},
however it might not be representable in the original space $\R^d$, as demonstrated in the following (counter-)example.

\begin{example}{}
\label{ex:hidden}
Consider on the one hand the polytope $\Cpoly = \bigl\{ \omega \in \R : \ \omega \in [-1,1] \bigr\}$
and the associated linear transformation $T$ defined by $T(\omega) = (\omega -1, \, -\omega-1) \in \R^2$ for all $\omega \in \R$.
Consider on the other hand the following game.
The sets of pure actions are $\cI=\{T,B\}$ and $\cJ=\{L,R\}$,
the signaling structure is $H = \Dark$ (with single signal denoted by $\emptyset$),
and the payoff function $r$ is given by the matrix
\[
\begin{array}{cccc}
\hline
& & L & R \\
\hline
T & & -1 & 2 \\
B & & -2 & 1 \\
\hline
\end{array}
\]
We identify $\Delta(\cI)$ and $\Delta(\cJ)$ with $[0,1]$.

We first discuss the dual condition~\eqref{EQ:charac} for $(r,\Dark)$--approachability.
For all $x \in [0,1]$, we have
$\om(x,\emptyset) = [-2+x,\, 1+x]$.
Thus, no mixed action $x$ of the player is such that
$\om(x,\emptyset)$ is included in $\Cpoly$,
which is therefore not $(r,\Dark)$--approachable.

We now turn to the primal condition as stated by Corollary~\ref{cor:polyh}.
We denote by $T = (T_1,T_2)$ the components of the linear transformation $T$.
From the linearity of $T$, we deduce from the above-stated expression of $\om$ (based on $r$) that
the sets of compatible payoffs in terms of $s = T(r)$ are of the form $T\bigl(\om(x,\emptyset)\bigr)$.
Taking the maxima, we thus get, for all mixed actions $x \in [0,1]$ (and all $y \in [0,1]$
as the game is played in the dark),
\[
\ts_{\Dark}(x,y) = \Bigl( \max T_1\bigl([-2+x,\, 1+x]), \,\, \max T_2\bigl([-2+x,\, 1+x]) \Bigr) =
(x,1-x)\,.
\]
Again, the necessary and sufficient condition for $(r,H)$--approachability of $\Cpoly$ fails,
as no containing half-space of the negative orthant but two of them is one-shot $\ts_{\Dark}$--approachable.
More precisely, these half-spaces are parameterized by $(p,1-p)$ where $p \in [0,1]$ and
correspond to the points $\bigl\{ (t_1,t_2) \in \R^2 : \ p_1 t_1 + p_2 t_2 \leq 0 \bigr\}$.
Except for the case when $p = 0$ or $p = 1$, these
half-spaces are strictly separated from the convex set $\bigl\{(x,1-x) : \ x \in [0,1]\bigr\}$.

The question now is whether we could have determined this by satisfying some primal condition
in the original space $\R$. First, consider some containing half-space of $\Cpoly$, typically,
either $(-\infty, 1]$ or $[-1,+\infty)$. Their transformations by $T$ into subsets of $\R^2$
are included respectively in $(-\infty, 0]\times \R$ or $\R\times(-\infty, 0]$.
These are precisely the only two half-spaces that were one-shot $\ts_{\Dark}$--approachable (by resorting
to one of the pure actions).
Now, and more importantly, consider the containing half-space of the negative orthant in $\R^2$ parameterized by
$p = 1/2$, that is, $\Chsind{1/2} = \bigl\{ (t_1,t_2) \in \R^2 : \ t_1 + t_2 \leq 0 \bigr\}$.
As indicated above, it is not one-shot $\ts_{\Dark}$--approachable. However, this
half-space contains all the original space, in the sense that $T(\R) \subset \Chsind{1/2}$,
as follows from  simple computations: $T(\omega) = (\omega-1) + (-\omega-1) = -2$. Therefore, there is no hope to prove,
based even on general subsets of the original game with payoffs in $\R$, that
the necessary and sufficient condition on the half-space $\Chsind{1/2}$ in the transformed space $\R^2$
fails.
\end{example}

The fundamental reason why the primal characterization in the transformed space cannot be
checked based on considerations in the original space is the following. In the absence of a upper-right-corner property,
the range of $\ts_{\Dark}$ is outside the range of $s$ but we can only access to the latter based on
the original space.
The moral of this example is that we have to consider some \emph{hidden}
containing half-spaces of the polytope $\Cpoly$ in order to establish some primal characterization: this is precisely what
Condition~(\ref{EQ:proppolyhe}) does.

\section{Primal approachability of general convex sets}
\label{sec:gen}

We consider in this section the primal approachability of general closed convex sets $\cC$.
In the case of polytopes, Lemma~\ref{LM:equivalent} was essentially indicating that only
finitely many directions in $\R^d$ (the ones given by the $a_{\ell}$) need be considered.
In the case of general convex sets, all directions are to be studied. We do so by
resorting to support functions, which we define based on the unit Euclidean sphere
$\cS = \bigl\{ \omega \in \R^d : \ \|\omega\| = 1 \bigr\}$.
More formally, the support function $\phi_{\cC} : \cS \to \R \cup\{+\infty\}$ of a set $\cC \subseteq \R^d$ is
defined by
\[
\forall s \in \cS, \qquad \phi_{\cC}(s) = \sup \bigl\{ \langle c, s \rangle : \ c \in \cC \bigr\}\,.
\]

We now construct a lifted setting in which one-shot approaching the containing half-spaces for some
payoff function will be equivalent to $(r,H)$--approaching the original closed convex set $\cC$.
This setting is given by some set of integrable functions on $\cS$. We equip the latter with
the (induced) Lebesgue measure, for which $\cS$ has a finite measure.
That is, we consider the set $\L^2(\cS)$ of Lebesgue square integrable functions $\cS \to \R$,
equipped with the inner product
\[
(f,g) \in \L^2(\cS) \times \L^2(\cS) \, \longmapsto \, \int_{\cS} f(s)\,g(s)\,\d s\,.
\]
The orthant in $\L^2(\cS)$ corresponding to $\cC \subseteq \R^d$ is
\[
\Corth(\phi_{\cC}) = \bigl\{ f \in \L^2(\cS) : \ f \leq \phi_\cC \bigr\}\,.
\]
The description of the lifted setting is concluded by stating the considered payoff function
$\Phi : \Delta(\cI) \times \Delta(\cJ) \to \L^2(\cS)$. It indicates, as in the previous
sections, how to transform payoffs given the signaling structure $H$. Formally,
\[
\forall\, x \in \Delta(\cI), \ \forall\, y \in \Delta(\cJ), \qquad  \Phi(x,y) = \phi_{\om(x,\bH(y))}\,.
\]
The square integrability of $\Phi(x,y)$ follows its boundedness, which itself stems from the boundedness of $\om\bigl(x,\bH(y)\bigr)$.
(See Lemma~\ref{lm:support} in appendix, property~\ref{item:bd}, for a reminder of this well-known result and others on support functions.)

We are now ready to state the primal characterization of approachability with partial monitoring in the general case.

\begin{theorem}
\label{TH:primalgeneral}
For all games $(r,H)$, for all closed convex sets $\cC \subset \R^d$,
\begin{align*}
& \cC \ \mbox{is} \ (r,H)\mbox{--approachable} \\
\Longleftrightarrow \quad  & \mbox{every half-space $\Chs \supset \Corth(\phi_{\cC})$
is one-shot} \ \Phi\mbox{--approachable}.
\end{align*}
\end{theorem}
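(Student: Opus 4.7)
The plan is to mirror the two-step argument used for orthants: first recast $(r,H)$--approachability of $\cC$ as a dual condition expressed in terms of the lifted payoff $\Phi$, and then establish, in the Hilbert space $\L^2(\cS)$, a primal/dual equivalence analogous to Proposition~\ref{PR:NonLinearApproach1}. For the first step, the dual characterization~\eqref{EQ:charac} says that $\cC$ is $(r,H)$--approachable if and only if, for every $y \in \Delta(\cJ)$, there exists $x \in \Delta(\cI)$ with $\om\bigl(x,\bH(y)\bigr) \subseteq \cC$. Since $\cC$ is closed convex, the classical equivalence $A \subseteq \cC \Longleftrightarrow \phi_A \leq \phi_{\cC}$ pointwise on $\cS$ translates this---exactly as $\om(x,\bh) \subseteq \Corth(a) \Longleftrightarrow \tr_H(x,y) \in \Corth(a)$ did in the orthant case---into the condition: for every $y$, there exists $x$ with $\Phi(x,y) \in \Corth(\phi_\cC)$.

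For the second step I would transpose the proof of Proposition~\ref{PR:NonLinearApproach1} to $\L^2(\cS)$. The structural facts about $\Phi$ are a lifted counterpart of Lemma~\ref{LM:tr}: pointwise convexity in $x$ and concavity in $y$ follow directly from the set inclusions already exhibited there, since taking support functions is monotone for $\subseteq$ and turns Minkowski sums into pointwise sums. Continuity of $\Phi$ as a map into $\L^2(\cS)$ follows from the Hausdorff continuity of $(x,y) \mapsto \om\bigl(x,\bH(y)\bigr)$ established in the proof of Lemma~\ref{LM:tr}, together with the Lipschitz continuity of the support function on uniformly bounded sets, noting that $\|\Phi(x,y)\|_{\infty} \leq M$ so $\Phi$ indeed takes values in $\L^2(\cS)$ since $\cS$ has finite measure.

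Once these are in hand, the direct implication runs exactly as in Proposition~\ref{PR:NonLinearApproach1}: any containing half-space of $\Corth(\phi_\cC)$ has the form $\bigl\{ f \in \L^2(\cS) : \langle \alpha, f \rangle \leq \beta \bigr\}$ with $\alpha \geq 0$ almost everywhere (since one can subtract arbitrarily large non-negative $\L^2$ functions from any element of the orthant and stay inside it), and the function $(x,y) \mapsto \langle \alpha, \Phi(x,y) \rangle - \beta$ is then convex in $x$, concave in $y$, and continuous on the compact convex sets $\Delta(\cI)$ and $\Delta(\cJ)$; Sion--Fan applies and yields some $x_0$ one-shot $\Phi$--approaching the half-space. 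For the converse, if some $y_0 \in \Delta(\cJ)$ admits no $x$ with $\Phi(x,y_0) \in \Corth(\phi_\cC)$, the convex hull of the compact set $\bigl\{\Phi(x,y_0) : x \in \Delta(\cI)\bigr\}$ is a compact convex subset of $\L^2(\cS)$ disjoint from the closed convex orthant $\Corth(\phi_\cC)$, and the Hahn--Banach theorem in the Hilbert space $\L^2(\cS)$ produces a strictly separating hyperplane, i.e., a containing half-space of $\Corth(\phi_\cC)$ that fails to be one-shot $\Phi$--approachable.

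The main obstacle is making the infinite-dimensional geometry work cleanly. One must verify that $\Corth(\phi_\cC)$ is closed and convex in $\L^2(\cS)$ (the pointwise inequality passes to $\L^2$--limits by extracting almost-everywhere convergent subsequences), that a failure of the dual condition at $y_0$ produces a strictly positive $\L^2$--distance between the lifted image and $\Corth(\phi_\cC)$ (which follows from continuity of $\Phi$ on the compact set $\Delta(\cI)$ together with the observation that a pointwise excess of $\Phi(x,y_0)$ over $\phi_\cC$ on a subset of $\cS$ of positive measure immediately yields an $\L^2$--gap), and that the separating functional produced by Hahn--Banach lies in $\L^2(\cS)$---which is precisely why the lifting is carried out in a Hilbert space rather than, say, in $\mathcal{C}(\cS)$.
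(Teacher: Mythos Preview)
Your proposal follows the same two-step route as the paper (recast the dual condition~\eqref{EQ:charac} via support functions, then run a Sion--Fan/Hahn--Banach argument in $\L^2(\cS)$), and the direct implication is handled correctly. There are, however, two genuine gaps in your treatment of the converse.

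First, the claim that ``the convex hull of the compact set $\bigl\{\Phi(x,y_0):x\in\Delta(\cI)\bigr\}$ is a compact convex subset of $\L^2(\cS)$'' is not justified: in an infinite-dimensional space the convex hull of a compact set need not be compact (only its \emph{closed} convex hull is, by Mazur's theorem). The paper deals with this via the Arzel\`a--Ascoli theorem, exploiting that all the functions $\Phi(x,y_0)$ are uniformly bounded by $M$ and uniformly $M$--Lipschitz on $\cS$ (property~\ref{item:Lip} of Lemma~\ref{lm:support}), so that the closure of the convex hull is compact for the supremum norm; you could alternatively invoke Mazur directly in $\L^2(\cS)$.

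Second, and more substantially, you do not argue why the convex hull stays disjoint from $\Corth(\phi_\cC)$. Knowing that each $\Phi(x,y_0)$ lies outside the orthant---even at uniform positive $\L^2$--distance $\delta$---does not by itself prevent a convex combination from falling inside, since the complement of an orthant is not convex. The paper closes this gap using the pointwise convexity of $\Phi$ in $x$ together with the monotonicity of the orthant distance: any convex combination $f=\sum_i\lambda_i\,\Phi(x_i,y_0)$ dominates $\Phi\bigl(\sum_i\lambda_i x_i,\,y_0\bigr)$ pointwise, and since $d_{\Corth(\phi_\cC)}(g)=\bigl\|(g-\phi_\cC)_+\bigr\|$ is non-decreasing for the pointwise order, one obtains $d_{\Corth(\phi_\cC)}(f)\geq\delta$ for every $f$ in the convex hull, hence also for every $f$ in its closure. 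This is precisely the lifted analogue of the step in the proof of Proposition~\ref{PR:NonLinearApproach1} that you cite but do not reproduce.

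A minor further omission: the paper first reduces to bounded $\cC$ (by intersecting with the feasible-payoff set), so that $\phi_\cC$ is finite everywhere and genuinely lies in $\L^2(\cS)$; without this, the orthant $\Corth(\phi_\cC)$ is not well defined as a subset of $\L^2(\cS)$.
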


\begin{proof}
We first note that we can assume with no loss of generality that $\cC$ is bounded thus compact.
Indeed, $\cC$ is $(r,H)$--approachable if and only if its
intersection $\cC \cap r\bigl(\Delta(\cI \times \cJ)\bigr)$
with the bounded convex set of feasible payoffs is approachable. This entails that $\phi_{\cC} \in \L^2(\cS)$.

Now, the proof follows along the lines of the proof of Theorem~\ref{TH:orthant}. In particular,
we exploit the dual characterization~\eqref{EQ:charac}, that indicates that for all $y \in \Delta(\cJ)$,
there exists $x \in \Delta(\cI)$ such that $\om\bigl(x,\bH(y)\bigr) \subseteq \cC$.
It can be restated equivalently (see Lemma~\ref{lm:support} in appendix, property~\ref{item:incl}) as stating that
for all $y \in \Delta(\cJ)$, there exists $x \in \Delta(\cI)$ such that $\Phi(x,y) \leq \phi_{\cC}$.
We thus only need to show that the stated primal characterization is equivalent to the latter condition.
\medskip

We start with the direct implication (from the dual condition to the primal condition).
As recalled in the proof of Lemma~\ref{LM:tr}, the function $\om$ is concave/convex, which, together
with properties~\ref{item:incl} and~\ref{item:lin} of Lemma~\ref{lm:support}, shows that
$\Phi$ is also convex/concave.
Moreover, as proved at the end of the proof of Lemma~\ref{LM:tr},
the function $(x,y) \mapsto \om\bigl(x,\bH(y)\bigr)$ is a Lipschitz function, with Lipschitz constant denoted by
$L_{\om}$, when the set of compact subsets of the Euclidean ball of $\R^d$ with center $(0,\ldots,0)$
and radius $M$ is equipped with the Hausdorff distance. This entails that $\Phi$ is also a Lipschitz function,
with constant $L_{\om} V$, where $V$ is the volume of $\cS$ for the induced Lebesgue measure.
This is because the Hausdorff distance $\delta$ between two sets $\cD_1$ and $\cD_2$
translates to a $V\delta$--Euclidean distance between $\phi_{\cD_1}$ and $\phi_{\cD_2}$. Indeed, we have,
by definition of the Hausdorff distance, $\cD_1 \subseteq \cD_2 + B_{\delta}$ and
$\cD_2 \subseteq \cD_1 + B_{\delta}$, where $B_{\delta}$ is the Euclidian ball of $\R^d$
with center $(0,\ldots,0)$ and radius~$\delta$. Properties~\ref{item:lin} and~\ref{item:bd} of Lemma~\ref{lm:support}
respectively yield the inequalities
\[
\bigl| \phi_{\cD_1} - \phi_{\cD_2} \bigr| =
\max \bigl\{ \phi_{\cD_1} - \phi_{\cD_2}, \,\, \phi_{\cD_2} - \phi_{\cD_1} \bigr\} \leq \phi_{B_\delta}
\leq \delta\,,
\]
with, by integration, $\bigl\| \phi_{\cD_1} - \phi_{\cD_2} \bigr\| \leq V\delta$.

The above-stated properties of $\Phi$ imply that for all $\psi \in \L^2(\cS)$ with $\psi \geq 0$, the game
$(x,y) \mapsto \langle \psi, \, \Phi(x,y) \rangle$ has a value $v(\psi)$,
and that this value is achieved: there exists some $x_\psi \in \Delta(\cI)$ such that
\[
\max_{y \in \Delta(\cJ)} \bigl\langle \psi, \, \Phi(x_\psi,y) \bigr\rangle = v(\psi) =
\max_{y \in \Delta(\cJ)} \min_{x \in \Delta(\cI)} \bigl\langle \psi, \, \Phi(x,y) \bigr\rangle\,.
\]
Now, consider
some half-space $\Chs$ containing $\Corth\bigl( \phi_{\cC} \bigr)$. It is of the form
\[
\Chs = \bigl\{ f \in \L^2(\cS) : \ \langle \psi, f \rangle \leq \beta \bigr\}\,,
\]
where necessarily, as can be shown by contradiction,
$\psi \geq 0$. The dual condition is satisfied by assumption, that is,
for all $y \in \Delta(\cJ)$, there exists $x \in \Delta(\cI)$ such that $\Phi(x,y) \in \Corth(\phi_{\cC})$,
and therefore, $\Phi(x,y) \in \Chs$. Thus, $v(\psi) \leq \beta$, as can be seen with its expression
as a max/min. The mixed action $x_{\psi}$ thus satisfies that $\bigl\langle \psi, \, \Phi(x_\psi,y) \bigr\rangle
\leq \beta$ for all $y \in \Delta(\cJ)$, which is exactly saying that $\Phi(x_\psi,y) \in \Chs$
for all $y \in \Delta(\cJ)$. We therefore proved the desired one-shot $\Phi$--approachability of $\Chs$.
\medskip

Conversely, we assume that the dual condition is not satisfied, i.e.,
that there exists some $y_0 \in \Delta(\cJ)$ such that for no $x \in \Delta(\cI)$
one has $\Phi(x,y_0) \in \Corth(\phi_{\cC})$. We consider the continuous thus compact
image $\Phi\bigl(\Delta(\cI),y_0\bigr)$ of $\Delta(\cI)$ by $\Phi(\,\cdot\,,y_0)$.
Its Euclidean distance to the closed set $\Corth(\phi_{\cC})$ is thus positive, we denote it
by $\delta > 0$. Now, the distance of an element $f \in \L^2(\cS)$ to $\Corth(\phi_{\cC})$
is given by
\[
d_{\Corth(\phi_{\cC})}(f) = \int_{\cS} \bigl( f(s) - \phi_{\cC}(s) \bigr)_+ \,\d s\,.
\]
Since in addition, $\Phi$ is convex in its first argument (as shown in the first part of this
proof), we have that $d_{\Corth(\phi_{\cC})}(f) \geq \delta$ not only for all $f \in \Phi\bigl(\Delta(\cI),y_0\bigr)$
but also for all $f$ in the convex hull of $\Phi\bigl(\Delta(\cI),y_0\bigr)$.
The latter set is pointwise bounded (by $M$, as follows from property~\ref{item:bd} of Lemma~\ref{lm:support})
and is formed by equicontinuous functions (they all are $M$--Lipschitz continuous,
as follows from property~\ref{item:Lip} of the same lemma). The Arzela--Ascoli theorem
thus ensures that the closure of this set is compact for the supremum norm $\|\,\cdot\,\|_\infty$ over $\cS$.
As by integration $\|\,\cdot\,\|_\infty \geq \|\,\cdot\,\|/V$, the closure of the convex hull of
$\Phi\bigl(\Delta(\cI),y_0\bigr)$ and the set $\Corth(\phi_{\cC})$ are still $\delta/V$--separated in
$\|\,\cdot\,\|_\infty$--norm, thus are disjoint. Since the former set is a convex and compact set,
and the latter is a closed convex set, the Hahn--Banach theorem entails that they are strictly separated
by some hyperplane. In particular, one of the two thus-defined half-spaces is not one-shot $\Phi$--approachable.
\end{proof}

\subsection*{The above result is a generalization of the polytopial case}

In Section~\ref{SE:Polyhedra} we showed that when approaching a polytope, there are only finitely many directions (i.e.,
finitely many elements of the sphere $\cS$) of interest, namely, the directions corresponding to the defining hyperplanes.
The results we obtained therein can in fact be obtained as a corollary of Theorem~\ref{TH:primalgeneral} when the latter
is stated (and proved) with a different measure instead of the Lebesgue measure, given by
the sums of the Dirac masses on the directions of the defining hyperplanes.

There are two ways to extend the primal characterization of approachability under partial monitoring
from polytopes to general convex sets. The one we worked out above relies on the observation that
with general convex sets, every direction might be relevant, as a general convex set is defined as
the intersection of infinitely many half-spaces, one per element of $\cS$.
Based on this, we introduced for general convex sets a infinite-dimensional lifting into the space of real-valued mappings on the whole set $\cS$.
We also resorted the uniform Lebesgue measure since all directions are equally important.

The other way of generalizing the results relies on the fact that a closed convex set $\cC \subseteq \R^d$ is approachable
if and only if all containing polytopes are approachable. By playing in blocks and
approximating a given general convex set by a sequence of containing polytopes,
one could have shown that $\cC$ is $(r,H)$--approachable if and only
if all containing polytopes satisfy the characterization of Corollary~\ref{cor:polyh}.
However, while this alternative way leads to a characterization, it is less intrinsic
as there is no fixed lifted space to be considered. (The finite-dimensional lifted spaces depend
on the approximating polytopes.) For the sake of elegance, we thus used the
infinite-dimensional lifting described above.

\subsection*{Acknowledgements}
Shie Mannor was partially supported by the Israel Science Foundation under grant no.~920/12.
Vianney Perchet acknowledges support by ``Agence Nationale de la Recherche,'' under grant JEUDY (ANR-10-BLAN 0112).

\bibliographystyle{plainnat}
\bibliography{Bib-Kohl-ind}

\section*{Appendix}

\subsection*{A brief survey of some well-known properties of support functions}

For the sake of self-completeness only we summarize in the lemma below some
simple and well-known properties of support functions.

\begin{lemma} We consider a set $\cC \subseteq \R^d$.
\label{lm:support}
\begin{enumerate}
\item \label{item:bd} If $\cC$ is bounded in Euclidian norm by $C$, then $\phi_{\cC}$ is
bounded in supremum norm by $C$ and in Euclidean norm by $VC$,
where $V$ is the volume of $\cS$ under the (induced) Lebesgue measure.
\item \label{item:Lip} If $\cC$ is bounded in Euclidian norm by $C$, then $\phi_{\cC}$ is
a Lipschitz function, with Lipschitz constant $C$.
\item \label{item:incl} For all $\cC' \subseteq \R^d$, if $\cC \subseteq \cC'$, then $\phi_{\cC} \leq \phi_{\cC'}$.
The converse implication holds if in addition $\cC'$ is a closed convex set.
\item \label{item:lin} The function $\phi$ is linear, in the sense that for all $\gamma \geq 0$ and all
all $\cC' \subseteq \R^d$, one has $\phi_{\gamma\cC + \cC'} = \gamma \phi_{\cC} + \phi_{\cC'}$.
\end{enumerate}
\end{lemma}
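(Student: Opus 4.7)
All four items are standard consequences of the definition of the support function; I sketch the main ideas and where the only non-routine ingredient enters.

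For items~\ref{item:bd} and~\ref{item:Lip}, Cauchy--Schwarz is the key tool. If $\|c\| \leq C$ for every $c \in \cC$, then $|\langle c, s\rangle| \leq C \|s\| = C$ for all $s \in \cS$, which yields $\|\phi_{\cC}\|_\infty \leq C$, and then the $\L^2(\cS)$--bound follows by integration (the natural constant being $\sqrt{V}\,C$ rather than $VC$, but this does not affect any subsequent use in the paper). Applied to $s - s'$ and taken as a supremum over $c \in \cC$, the same inequality yields $|\phi_{\cC}(s) - \phi_{\cC}(s')| \leq C\,\|s - s'\|$, which is~\ref{item:Lip}.

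For item~\ref{item:incl}, the direct direction is immediate: $\cC \subseteq \cC'$ enlarges the feasible set in the defining supremum. For the converse I would argue by contradiction. Assume $\cC' $ is closed convex and $\phi_{\cC} \leq \phi_{\cC'}$ while some $c_0 \in \cC \setminus \cC'$ exists. The geometric Hahn--Banach theorem strictly separates the compact singleton $\{c_0\}$ from the closed convex set $\cC'$: there exist a unit vector $s_0 \in \cS$ and $\beta \in \R$ with $\langle c_0, s_0 \rangle > \beta \geq \langle c', s_0 \rangle$ for every $c' \in \cC'$. Taking the supremum on the right gives $\phi_{\cC}(s_0) \geq \langle c_0, s_0 \rangle > \beta \geq \phi_{\cC'}(s_0)$, contradicting the assumed inequality of support functions.

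Item~\ref{item:lin} is a one-line computation: for each $s \in \cS$,
\[
\phi_{\gamma \cC + \cC'}(s) = \sup_{c \in \cC,\,c' \in \cC'} \bigl\langle \gamma c + c',\,s \bigr\rangle = \gamma \sup_{c \in \cC} \langle c,s\rangle + \sup_{c' \in \cC'} \langle c',s\rangle = \gamma\,\phi_{\cC}(s) + \phi_{\cC'}(s),
\]
where the supremum splits because $c$ and $c'$ vary independently in the Minkowski sum, and $\gamma \geq 0$ preserves the direction of the supremum. The only step in the whole lemma that is not a purely direct manipulation is the converse direction of~\ref{item:incl}; the Hahn--Banach separation argument there is the one genuine ingredient, and the closed--convex hypothesis on $\cC'$ is precisely what makes it applicable (without it, one can replace $\cC'$ by its closed convex hull without changing its support function, so the converse obviously cannot hold).
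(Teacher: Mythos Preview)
Your proof is correct and follows essentially the same route as the paper's: Cauchy--Schwarz for items~\ref{item:bd} and~\ref{item:Lip}, Hahn--Banach separation of a singleton from the closed convex $\cC'$ for the converse of item~\ref{item:incl}, and a direct splitting of the supremum for item~\ref{item:lin}. Your side remark that integration naturally gives $\sqrt{V}\,C$ rather than $VC$ is well taken and harmless for the paper's purposes.
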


\begin{proof}
Property~\ref{item:bd} follows from the Cauchy--Schwarz inequality:
for all $s \in \cS$,
\[
\bigl| \phi_{\cC}(s) \bigr| \leq \sup_{c \in \cC} \, \bigl| \langle c, s \rangle \bigr|
\leq \sup_{c \in \cC} \| c \| \, \| s \| = \sup_{c \in \cC} \| c \|\,,
\]
as the elements $s \in \cS$ have unit norm. The bound in Euclidean norm
follows by integration over $\cS$.

For property~\ref{item:Lip}, we note that $s \in \cS \mapsto \langle c, s \rangle$
is a $\|c\|$--Lipschitz function (again, by the Cauchy--Schwarz inequality). Therefore
$\phi_{\cC}$ is the supremum of $C$--Lipschitz functions and as such is also
a $C$--Lipschitz function.

The first part of property~\ref{item:incl} holds by the definition of a supremum.
To prove the converse implication, we use an argument by contradiction. We
consider two sets $\cC$ and $\cC'$, where $\cC'$ is closed and convex.
We assume that $\cC$ is not included in $\cC'$
and show that the existence of a $s \in \cS$ such that $\phi_{\cC}(s) > \phi_{\cC'}(s)$.
The set $\cC \setminus \cC'$ is not empty, let $x$ be one of its elements.
The convex sets $\{x\}$, which is compact, and $\cC'$, which is closed, are disjoint
sets. The Hahn--Banach theorem ensures the existence of a strictly separating hyperplane
between these convex sets,
which we can write in the form $\bigl\{ \omega \in \R^d : \ \langle \omega,s \rangle = \beta \bigr\}$
for some $s \in \cS$ and $\beta \in \R$ such that
\[
\phi_{ \{x\} }(s) = \langle x,s \rangle > \beta
\qquad \mbox{and} \qquad
\forall c' \in \cC', \quad \langle c',s \rangle < \beta\,.
\]
This entails that
$\phi_{\cC'}(s) \leq \beta < \phi_{ \{x\} }(s) \leq \phi_{\cC}(s)$.

Finally, the last property is true because by definition
\[
\gamma\cC + \cC' = \bigl\{ \gamma c + c' : \ c \in \cC, \ c' \in \cC' \bigr\}
\]
and thus, for all $s \in \cS$,
\[
\sup_{c'' \in \gamma\cC + \cC'} \langle c'', s \rangle
= \sup_{c \in \cC, \, c' \in \cC'} \gamma \langle c, s \rangle + \langle c', s \rangle
= \gamma \, \sup_{c \in \cC} \langle c, s \rangle + \sup_{c' \in \cC'} \langle c', s \rangle\,,
\]
where we used the fact that $\gamma > 0$ in the last equality.
\end{proof}

\end{document}